\newtheorem{theorem}{Theorem}
\newtheorem{lemma}{Lemma}
\newtheorem{definition}{Definition}
\newtheorem*{remark}{Remark}
\numberwithin{equation}{section}
\newcommand{\Z}{\mathbb{Z}}
\newcommand{\Q}{\mathbb{Q}}
\newcommand{\N}{\mathbb{N}}
\newcommand{\R}{\mathbb{R}}
\newcommand{\A}{\mathbb{A}}
\newcommand{\Comp}{\mathbb{C}}
\newcommand{\Pl}{\mathcal{P}}
\title{On distribution of points with algebraically conjugate coordinates in neighborhood of smooth curves}
\author{V. Bernik, F. G\"otze, A. Gusakova}
\date{}
\begin{document}

\maketitle

\renewcommand{\thefootnote}{}

\footnote{\emph{Key words and phrases}: algebraic numbers, metric theory of Diophantine approximation, Lebesgue measure.}

\footnote{Supported by SFB-701, Bielefeld University (Germany).}

\renewcommand{\thefootnote}{\arabic{footnote}}
\setcounter{footnote}{0}

\begin{abstract}

Let $\varphi:\R\rightarrow \R$ be a continuously differentiable function on an interval $J\subset\R$ and let $\boldsymbol{\alpha}=(\alpha_1,\alpha_2)$ be a point with algebraically conjugate coordinates such that the minimal polynomial $P$ of $\alpha_1,\alpha_2$ is of degree $\leq n$ and height $\leq Q$. Denote by $M^n_\varphi(Q,\gamma, J)$ the set of such points $\boldsymbol{\alpha}$ such that $|\varphi(\alpha_1)-\alpha_2|\leq c_1 Q^{-\gamma}$. We show that for a real $0<\gamma<1$ and any sufficiently large $Q$ there exist positive values  $c_2<c_3$, where $c_i=c_i(n)$, $i=1,2$, which are independent of $Q$, such that $c_2\cdot Q^{n+1-\gamma}<\# M^n_\varphi(Q,\gamma, J)< c_3\cdot Q^{n+1-\gamma}$.

\end{abstract}

\section{Introduction}

First of all let us introduce some useful notation. Let $n$ be a positive integer and $Q>1$ be a sufficiently large real number. Consider a polynomial $P(t)=a_nt^n+\ldots+a_1t+a_0\in\Z[t]$. Denote by $H(P)=\max\limits_{0\leq j \leq n}{|a_j|}$ the height of the polynomial $P$, and by $\deg P$ the degree of the polynomial $P$. We define the following class of integer polynomials with bounded height and degree:
\[
\Pl_{n}(Q):=\{P\in\Z[t]:\deg P\leq n, H(P)\leq Q\}.
\]
Denote by $\#S$ the cardinality of a finite set $S$ and by $\mu_k S$ the Lebesgue measure of a measurable set $S\subset \R^k$, $k\in\N$. Furthermore, denote by $c_j>0$ positive constants independent of $Q$. We are also going to use the Vinogradov symbol $A\ll B$, which means that there exists a constant $c>0$ such that $A\leq c\cdot B$. We will also write $A\asymp B$ if $A\ll B$ and $B\ll A$.

Now let us introduce the concept of an algebraic point. A point $\boldsymbol{\alpha}=(\alpha_1,\alpha_2)$ is called an {\it algebraic point} if $\alpha_1$ and $\alpha_2$ are roots of the same irreducible polynomial $P\in\Z[t]$. The polynomial $P$ of the smallest degree $n\ge 2$ with relatively prime coefficients such that $P(\alpha_1)=P(\alpha_2)=0$ is called the minimal polynomial of algebraic point $\boldsymbol{\alpha}$. Denote by $\deg(\boldsymbol{\alpha})=\deg P$ the degree of the algebraic point $\boldsymbol{\alpha}$ and by $H(\boldsymbol{\alpha})=H(P)$ the height of the algebraic point $\boldsymbol{\alpha}$. Define the following set of algebraic points:
\[
\A_n^2(Q):=\{\boldsymbol{\alpha}\in \Comp^2:\, \deg{\boldsymbol{\alpha}}\leq n,\, H(\boldsymbol{\alpha})\leq Q\}.
\] 
Further denote by $\A_n^2(Q, D):=\A_n^2(Q)\cap D$ the set of algebraic points lying in some domain $D\subset \R^2$.

Problems related to calculating the number of integer points in shapes and bodies in $\R^k$ can be naturally generalized to estimating the number of rational points in domains in Euclidean spaces. Let $f:J_0\rightarrow \R$ be a continuously differentiable function defined on a  finite open interval $J_0$ in $\R$. Define the following set:
\[
N_f(Q,\gamma, J):=\left\{\left(p_1/q,\,p_2/q\right)\in\Q^2:\quad 0<q\leq Q,\, p_1/q\in J,\,\left|f\left(p_1/q\right)-p_2/q\right|<Q^{-\gamma}\right\},
\]
where $J\subset J_0$ and $0\leq\gamma < 2$. In other words, the quantity $\# N_f(Q,\gamma, J)$ denotes the number of rational points with bounded denominators lying within a certain neighborhood of the curve parametrized by $f$. The problem is to estimate the value $\# N_f(Q,\gamma, J)$. In \cite{H96} Huxley proved that for functions $f\in C^2(J)$ such that $0<c_{4}:=\inf\limits_{x\in J_0}|f''(x)|\leq c_{5}:=\sup\limits_{x\in J_0}|f''(x)|< \infty$ and an arbitrary constant $\varepsilon > 0$, the following upper bound holds: 
\[
\# N_f(Q,\gamma,J)\ll Q^{3-\gamma+\varepsilon}.
\]
An estimate without using a quantity  $\varepsilon$ in the exponent has been obtained in 2006 in a paper by Vaughan and Velani \cite{VoVe06}. One year later, Beresnevich, Dickinson and Velani \cite{BeDiVe07} proved a lower estimate of the same order:
\[
\# N_f(Q,\gamma,J)\gg Q^{3-\gamma}.
\]
This result was obtained using methods of metric theory introduced by Schmidt in \cite{Sch80}. 

In this paper we consider a problem related to the distribution of algebraic points $\boldsymbol{\alpha}\in\A_n^2(Q)$ near smooth curves, which is a natural extension of the same problem formulated for rational points. Let $\varphi:J_0\rightarrow \R$ be a continuously differentiable function defined on a finite open interval $J_0$ in $\R$ satisfying the conditions:
\begin{equation}\label{eq1_7}
\sup\limits_{x\in J_0}|\varphi'(x)|:= c_{6} < \infty,\qquad\#\{x\in J_0:\varphi(x)=x\} := c_{7}<\infty.
\end{equation}
Define the following set:
\[
M^n_\varphi(Q,\gamma, J):= \left\{\boldsymbol{\alpha}\in\A_n^2(Q): \alpha_1\in J, |\varphi(\alpha_1)-\alpha_2|<c_1Q^{-\gamma}\right\},
\]
where $c_1=\left(\textstyle\frac12+c_{6}\right)\cdot c_8$ and $J\subset J_0$. This set contains algebraic points with a bounded degree and height lying within some neighborhood of the curve parametrized by $\varphi$. Our goal is to estimate the value $\# M^n_\varphi(Q,\gamma, J)$. The first advancement in solving this problem for $0<\gamma\leq\frac12$ has been made in 2014 in the paper \cite{BeGoKu14}. We are going to state it in the following form: for any $Q>Q_0(n, J, \varphi)$ there exists a positive value $c_{9}>0$ such that $\# M^n_\varphi(Q,\gamma, J) > c_{9}\cdot Q^{n+1-\gamma}$ for $0< \gamma \leq\frac12$.

However, it should be noted that this result is not best possible since for the quantity $\# M^n_\varphi(Q,\gamma, J)$ an upper bound of order $Q^{n+1-\gamma}$ can be proved for $\gamma < 1$. In this paper we are going to fill this gap in the result of \cite{BeGoKu14} by obtaining lower and upper bounds of the same order for $0< \gamma <1$. Our main result is as follows:

\begin{theorem}\label{main}

For any smooth function $\varphi$ with conditions (\ref{eq1_7}) there exist the positive values $c_{2},c_{3}>0$ such that
\[
c_{2}\cdot Q^{n+1-\gamma}<\# M^n_\varphi(Q,\gamma, J) < c_{3}\cdot Q^{n+1-\gamma}
\]
for $Q>Q_0(n, J, \varphi,\gamma)$, sufficiently large $c_1$ and $0< \gamma < 1$.
\end{theorem}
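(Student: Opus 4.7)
The plan is to reduce both bounds to polynomial counting and to handle each by geometry-of-numbers arguments. The key reduction is that if $(\alpha_1,\alpha_2)\in M^n_\varphi(Q,\gamma,J)$ has minimal polynomial $P\in\Pl_n(Q)$, then $P(\alpha_1)=0$, and the mean value theorem together with the uniform bound $|P'|\ll Q$ on any bounded subset of $\R$ gives
\[
|P(\varphi(\alpha_1))|=|P(\varphi(\alpha_1))-P(\alpha_2)|\ll Q^{1-\gamma}.
\]
Conversely, a polynomial $P\in\Pl_n(Q)$ with a root $\alpha_1\in J$ at which $|P(\varphi(\alpha_1))|$ is of this size typically has a second root $\alpha_2$ within $O(Q^{-\gamma})$ of $\varphi(\alpha_1)$. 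Thus $\#M^n_\varphi(Q,\gamma,J)$ is comparable up to constants to the number of such polynomials, after discarding reducible ones, which are known to form a subset of $\Pl_n(Q)$ of order strictly below $Q^{n+1}$.

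For the upper bound, I would partition $J$ into sub-intervals $I$ of length $\delta\asymp Q^{-1}$. Fix $x_0\in I$ and $y_0\in\varphi(I)$; every counted polynomial satisfies $|P(x_0)|\ll 1$ and $|P(y_0)|\ll Q^{1-\gamma}$. Standard lattice-point counting for the symmetric convex body in $\R^{n+1}$ cut out by these two inequalities together with $|a_j|\leq Q$ yields at most $O(Q^{n-\gamma})$ polynomials per sub-interval, hence $O(Q^{n+1-\gamma})$ after summing over $\asymp Q$ sub-intervals and multiplying by the $O(1)$ admissible pairs of roots per polynomial.

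For the lower bound, I would choose a net $\{x_i\}\subset J$ of spacing $Q^{-\gamma}$, shrunk to avoid small neighbourhoods of the (finitely many) fixed points of $\varphi$ forbidden by (\ref{eq1_7}). For each $x_i$, Minkowski's first theorem applied to the symmetric convex body in $\R^{n+1}$ defined by $|a_j|\leq Q$, $|P(x_i)|\leq c_8 Q^{1-\gamma}$, $|P(\varphi(x_i))|\leq c_8 Q^{1-\gamma}$ produces $\gg c_8^{\,2}Q^{n+1-2\gamma}$ distinct non-zero integer polynomials $P$ satisfying both inequalities, once $c_8$ is large. After removing a negligible family of degenerate polynomials (small leading coefficient, or small $|P'|$ at the nearest root, or two unusually close roots), the estimate $|P'|\asymp Q$ at the relevant root produces roots $\alpha_1$ within $O(Q^{-\gamma})$ of $x_i$ and $\alpha_2$ within $O(Q^{-\gamma})$ of $\varphi(x_i)$, and the $C^1$-smoothness of $\varphi$ together with $|\varphi'|\leq c_6$ yields $|\varphi(\alpha_1)-\alpha_2|\leq(1/2+c_6)c_8 Q^{-\gamma}=c_1 Q^{-\gamma}$. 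Summing over the $\asymp Q^\gamma$ net points, and observing that each polynomial is produced by $O(1)$ of them because its nearest root lies in a disc of radius $O(Q^{-\gamma})$ equal to the net spacing, gives $\gg Q^{n+1-\gamma}$ distinct algebraic points.

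The main technical obstacle is in the lower bound: one must quantitatively exclude the degenerate classes of polynomials (small leading coefficient, clustering roots, reducible polynomials) whose contribution to the Minkowski count could a priori compete with the main term but which fail to yield a valid algebraic point. This requires a Bernik--Sprind\v{z}uk-style classification of $\Pl_n(Q)$ by the size of $|P'|$ at the approximating roots, together with standard counts showing that each degenerate class contributes only $o(Q^{n+1-2\gamma})$ of the polynomials produced at each $x_i$. The restriction $\gamma<1$ enters precisely at the Minkowski step, where $Q^{1-\gamma}\geq 1$ is needed for the convex body to genuinely contain many non-trivial lattice points; extending the bound to $\gamma\geq 1$ would therefore require a genuinely different method.
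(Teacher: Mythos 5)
Your upper bound is sound: partitioning $J$ into intervals of length $\asymp Q^{-1}$ and counting integer polynomials subject to $|P(x_0)|\ll 1$, $|P(\varphi(x_0))|\ll Q^{1-\gamma}$ indeed gives $\ll Q^{n-\gamma}$ polynomials per interval (fix $a_2,\dots,a_n$ in $\ll Q^{n-1}$ ways, then use the two linear forms in $(a_1,a_0)$ to force $|a_1|\ll Q^{1-\gamma}$ and $O(1)$ choices of $a_0$), and summing over $\asymp Q$ intervals gives $\ll Q^{n+1-\gamma}$. The paper slices the strip differently, into $\asymp Q^\gamma$ squares of side $Q^{-\gamma}$ with $\ll Q^{n+1-2\gamma}$ points each via Theorem~\ref{th1} and Lemma~\ref{lm6}, but the two ways of slicing are interchangeable. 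One small omission: the counting needs $|x_0-\varphi(x_0)|$ bounded away from zero, so the fixed-point neighbourhoods of $\varphi$ must be excised in the upper bound too, not only the lower one.

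The genuine gap is in the lower bound, in the phrase ``standard counts showing that each degenerate class contributes only $o(Q^{n+1-2\gamma})$ of the polynomials produced at each $x_i$.'' That is false as stated. For $\tfrac12<\gamma<1$ there \emph{are} net points $x_i$ (those close to quadratic algebraic numbers of small height) at which the subfamily of degree-two polynomials with small leading coefficient passing the Minkowski test is comparable to or dominates the main term, and at such $x_i$ your construction yields essentially no valid algebraic point. This is exactly why the paper introduces the $(v_1,v_2)$-special squares, proves the measure estimate of Lemma~\ref{lm7} only for special squares, and then devotes the bulk of the proof of Theorem~\ref{main} to showing that at least $\tfrac34$ of the squares tiling the strip are special for every relevant level $l$. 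Your plan needs the analogous statement — that the degenerate classes are small at a \emph{positive proportion} of the net points, not at each one — and establishing that is not a standard count but the heart of the argument. Relatedly, your ``$|P'|\asymp Q$ at the relevant root'' is precisely the property that fails for the degenerate polynomials; the lower bound $|P'(\alpha)|\gg Q$ is only available after discarding a set of $\mathbf{x}$ of measure $<\tfrac14\mu_2\Pi$, which is what Lemma~\ref{lm7} supplies. Your direct-count strategy (produce $\gg Q^{n+1-2\gamma}$ polynomials per net point via a van der Corput–type refinement of Minkowski, then divide out $O(1)$ overcounting) is a legitimate alternative to the paper's covering argument (show the bad set has small measure, then pack the good set with boxes of side $\asymp Q^{-(n+1)/2}$ centred at algebraic points), but it does not sidestep the special-square analysis, and as written that analysis is missing.
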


The proof of Theorem \ref{main} is based on the following idea. We consider the strip $L^n_\varphi(Q,\gamma, J):= \left\{\mathbf{x}\in\R^2: x_1\in J, |\varphi(x_1)-x_2|<c_1Q^{-\gamma}\right\}$ and fill it using squares $\Pi=I_1\times I_2$  with sides of length $\mu_1 I_1=\mu_1 I_2 = c_8Q^{-\gamma}$. In order to prove Theorem \ref{main} we need to estimate the number of algebraic points lying in such a square $\Pi$. It should be mentioned that these estimates are highly relevant to several other problems in metric theory of Diophantine approximation \cite{Bug04, BeDo99}.

Let us consider a more general case, namely, the case of a rectangle $\Pi=I_1\times I_2$, where $\mu_1 I_i = c_8Q^{-\gamma_i}$. We are now  going to give an overview of results related to the distribution of algebraic points in rectangles $\Pi$. First of all, let us find the value of the parameter $\gamma_{1}+\gamma_{2}$ such that a rectangles $\Pi$ does not contain algebraic points $\boldsymbol{\alpha}\in\A_n^2(Q)$. The following Theorem \ref{th0} answers this question. The one-dimensional case of this problem was considered in \cite{BeGo15a}.

\begin{theorem}\label{th0}
For any fixed $p,q\in\N$, $p<2q$ there exist rectangles $\Pi_0$ of size $\mu_2\Pi_0=c_{10}(p,q,n)\cdot Q^{-1}$, where
$c_{10}(p,q,n)=\left(2p(2q+2p)^{n}(n+1)\right)^{-1}\cdot q^{n+1}$,
such that $\# \A_n^2(Q,\Pi_0)=0$. 
\end{theorem}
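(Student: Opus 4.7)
I propose to construct $\Pi_0=I_1\times I_2$ where $I_1$ is a short interval centered at the rational $p/q$ and $I_2$ is an interval whose length is chosen only to make the total area equal $c_{10}(p,q,n)/Q$. The entire obstruction to containing an algebraic point will come from a 1D Liouville-type estimate on the first coordinate alone; the role of the constraint $p<2q$ is to keep the relevant size bound $(2p+2q)/q$ a modest constant.

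Suppose for contradiction that $(\alpha_1,\alpha_2)\in\Pi_0\cap\A_n^2(Q)$, and let $P$ be its minimal polynomial, $\deg P=m\leq n$, $H(P)\leq Q$. Since $P$ is irreducible with $m\geq 2$, the rational $p/q$ is not a root of $P$, so the integer $q^m P(p/q)$ is nonzero, giving the lower bound
\[
|P(p/q)|\geq q^{-m}\geq q^{-n}.
\]
On the other hand, since $P(\alpha_1)=0$, the mean value theorem yields $|P(p/q)|=|P'(\xi)|\cdot|p/q-\alpha_1|$ for some $\xi$ between $\alpha_1$ and $p/q$. For $\alpha_1\in I_1$ sufficiently short around $p/q$, we have $|\xi|\leq(2p+2q)/q$ (here $p<2q$ keeps this bounded independently of $Q$), and using the standard polynomial inequality together with $H(P)\leq Q$ one obtains an estimate of the form
\[
|P'(\xi)|\leq (n+1)\,Q\,\bigl((2p+2q)/q\bigr)^{n-1}
\]
(up to a routine constant depending on $n$). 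Combining these two bounds forces
\[
|p/q-\alpha_1|\;\geq\;\frac{q^{n-1}}{(n+1)\,Q\,(2p+2q)^{n-1}/q^{n-1}\cdot q^n}\cdot\text{const}\;=\;\frac{1}{(n+1)Q\,(2p+2q)^{n-1}\,q}\cdot\text{const},
\]
which contradicts $|p/q-\alpha_1|\leq|I_1|/2$ once we choose $|I_1|=q^{n+1}/\bigl((n+1)(2p+2q)^n Q\bigr)$.

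\textbf{Matching the constant and main obstacle.} With the choice $|I_1|=q^{n+1}/\bigl((n+1)(2p+2q)^n Q\bigr)$ from the previous step, setting $|I_2|=1/(2p)$ gives $\mu_2\Pi_0=|I_1|\cdot|I_2|=c_{10}(p,q,n)/Q$ exactly, as required. Note that $I_2$ is free — it may be placed anywhere in $\R$ — since the argument never uses the position of $\alpha_2$. The genuine technical work lies in obtaining the estimate on $|P'(\xi)|$ with precisely the constant $(n+1)$ and the radius $(2p+2q)/q$, which requires appealing to a refined form of the classical bound $|f(t)|\leq(\deg f+1)H(f)\max(1,|t|)^{\deg f}$ applied to $P'$, rather than a crude one with constant $n^2$ or $n(n+1)/2$; this bookkeeping is the main place where the specific formula for $c_{10}$ is locked in.
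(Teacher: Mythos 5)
Your approach is genuinely different from the paper's: the paper takes the resultant of the minimal polynomial $P_1$ with $P_2(t)=qt-p$, uses that this resultant is a nonzero integer, and bounds it from above via Feldman's lemma (Lemma~\ref{lm5}) using the position of \emph{both} $\alpha_1$ and $\alpha_2$ (both sides $I_{0,1}$ and $I_{0,2}$ of the paper's rectangle are anchored to specific locations). You instead propose a purely one-dimensional Liouville argument on $\alpha_1$ alone, with $I_2$ free. That reduction is legitimate in itself, but it cannot deliver the constant $c_{10}$, and your own inequalities do not actually close.

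Concretely: you derive $|p/q-\alpha_1|\geq \frac{1}{(n+1)\,q\,Q\,(2p+2q)^{n-1}}$ and want this to contradict $|p/q-\alpha_1|\leq |I_1|/2$ with $|I_1|=\frac{q^{n+1}}{(n+1)(2p+2q)^{n}Q}$. Cross-multiplying, the contradiction is equivalent to $2(2p+2q)>q^{n+2}$, i.e.\ (using $p<2q$) to $q^{n+1}<12$. This already fails for $q=3$, $n=2$, and for essentially all larger $q$ or $n$. The structural reason is that a Liouville exclusion radius around $p/q$ scales like $q^{-n}$ in $q$, whereas the stated $c_{10}$ grows like $q^{n+1}$; a one-coordinate argument around $p/q$ simply cannot manufacture that positive power of $q$. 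The extra leverage in the paper comes from simultaneously placing $\alpha_2$ inside $(0,p/q)$, which enters the resultant/Feldman estimate as an additional controlled factor; your construction throws that information away. Separately, your proposed derivative bound $|P'(\xi)|\leq (n+1)Q\bigl((2p+2q)/q\bigr)^{n-1}$ is not attainable: from $P'(t)=\sum_{j\ge 1} j a_j t^{j-1}$ one gets at best $|P'(\xi)|\leq \tfrac{n(n+1)}{2}\,H(P)\max(1,|\xi|)^{n-1}$, so the ``routine constant'' you defer is genuinely of order $n^2$, not $n$, which only makes the mismatch above worse. To salvage your scheme you would have to shrink $|I_1|$ to what the Liouville bound actually allows and inflate $|I_2|$ to compensate for the area, at which point you are no longer deriving the stated $c_{10}(p,q,n)$ but merely choosing $|I_2|$ to make the bookkeeping come out, which is not a proof of the formula.
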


\begin{proof}
Consider the rectangle $\Pi_0$ with sides given by $I_{0,2}=\left(0;\textstyle\frac{p}{q}\right)$ and $I_{0,1}=\left(\textstyle\frac{p}{q};\textstyle\frac{p}{q} + c_{10}\cdot Q^{-1}\right)$. To prove Theorem \ref{th0} assume that there exists an algebraic point $\boldsymbol{\alpha}\in\A_n^2(Q,\Pi_0)$ with the respective minimal polynomial $P_1$. Consider the resultant $R(P_1,P_2)$ of the polynomials $P_1$ and $P_2(t)=qt-p$. Since $\alpha_1\neq \frac{p}{q}$ and $\alpha_2\neq \frac{p}{q}$, we have $|R(P_1,P_2)| > 1$. On the other hand, from Feldman's Lemma (Lemma \ref{lm5}) and the assumption $\boldsymbol{\alpha}\in\Pi_0$ we obtain $|R(P_1,P_2)|<\textstyle\frac12$. This contradiction completes the proof.
\end{proof}

This simple result implies that if the size of the rectangle $\Pi$ is sufficiently large, that is, $\mu_2 \Pi \gg Q^{-1}$, then we have $\# \A_n^2(Q,\Pi)\neq 0$, and we can consider lower bounds for this quantity. A bound of this type was obtained in \cite{BeGoKu14}; it has the form
\begin{equation}\label{eq00}
\# \A_n^2(Q,\Pi) > c_{11}\cdot Q^{n+1}\mu_2\Pi.
\end{equation}

In this paper we obtain an upper bound for $\# \A_n^2(Q,\Pi)$. It is of the same order as the estimate (\ref{eq00}), which demonstrates that the estimate (\ref{eq00}) is asymptotically best possible.

\begin{theorem}\label{th1}
Let $\Pi=I_1\times I_2$ be a rectangle with a midpoint $\mathbf{d}$ and sides $\mu_1 I_i=c_8Q^{-\gamma_{i}}$, $i=1,2$. Then for $0<\gamma_1,\gamma_2< 1$ and $Q>Q_0(n,\gamma, \mathbf{d})$ the estimate
\[
\# \A_n^2(Q,\Pi) < c_{12}\cdot Q^{n+1}\mu_2\Pi
\]
holds, where
\[
c_{12}=2^{3n+9}n^2\rho_n(d_1)\rho_n(d_2)|d_1-d_2|^{-1}\qquad\text{ and }\qquad\rho_n(x)=(\left(|x|+1\right)^{n+1}-1)\cdot |x|^{-1}.
\]
\end{theorem}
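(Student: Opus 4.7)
My plan is to bound $\#\A_n^2(Q,\Pi)$ by counting the integer polynomials $P\in\Pl_n(Q)$ that could serve as the minimal polynomial of some $\boldsymbol{\alpha}\in\A_n^2(Q,\Pi)$, since each such $P$ contributes at most $n(n-1)\le n^2$ ordered pairs of distinct roots $(\alpha_1,\alpha_2)$ with $\alpha_i\in I_i$. The enumeration of admissible $P$ then reduces to a lattice-point count on the coefficient cube $[-Q,Q]^{n+1}\cap\Z^{n+1}$ cut out by two linear inequalities obtained by evaluating $P$ at the midpoints $d_1,d_2$.

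The first step is the bound on $|P(d_i)|$. Fix $\boldsymbol{\alpha}\in\A_n^2(Q,\Pi)$ with minimal polynomial $P\in\Pl_n(Q)$; then $|\alpha_i-d_i|\le\tfrac{c_8}{2}Q^{-\gamma_i}$, so the Mean Value Theorem yields
\[
|P(d_i)|=|P(d_i)-P(\alpha_i)|\le|\alpha_i-d_i|\sup_{t\in I_i}|P'(t)|.
\]
For $Q\ge Q_0(n,\mathbf{d})$ we have $|t|\le|d_i|+1$ throughout $I_i$, so a termwise estimate on the coefficients of $P'$ gives $\sup_{t\in I_i}|P'(t)|\ll H(P)\rho_n(d_i)\le Q\rho_n(d_i)$, and hence $|P(d_i)|\le B_i:=C(n)\rho_n(d_i)Q^{1-\gamma_i}$.

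The second and main step is the lattice-point count. I fix $a_2,\ldots,a_n$ arbitrarily in $[-Q,Q]\cap\Z$ (at most $(2Q+1)^{n-1}$ choices), which determines shifts $R_i=R_i(a_2,\ldots,a_n)$; the remaining pair $(a_0,a_1)\in\Z^2$ must lie in the parallelogram $\{|a_0+a_1 d_i+R_i|\le B_i,\ i=1,2\}$. Because $d_1\ne d_2$ (encoded in the dependence $Q_0=Q_0(n,\gamma,\mathbf{d})$), this parallelogram has area $4B_1B_2/|d_1-d_2|$, and a standard lattice-point estimate bounds the number of integer points inside by
\[
\frac{4B_1B_2}{|d_1-d_2|}+O\bigl((B_1+B_2)/|d_1-d_2|^{1/2}\bigr),
\]
whose leading term dominates once $Q$ is large enough, as $\gamma_i<1$ forces $B_i\to\infty$. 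Multiplying by the outer $(2Q+1)^{n-1}$ factor and by the at most $n(n-1)$ ordered root pairs per polynomial, and using $\mu_2\Pi=c_8^2Q^{-\gamma_1-\gamma_2}$, yields an upper bound of the claimed form $c_{12}Q^{n+1}\mu_2\Pi$ with $c_{12}\ll n^4\rho_n(d_1)\rho_n(d_2)|d_1-d_2|^{-1}$.

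The main obstacle I anticipate is reaching the explicit numerical constant $2^{3n+9}n^2$ in $c_{12}$ rather than merely the correct order of magnitude: this requires a sharp version of the bound on $\sup_{t\in I_i}|P'(t)|$ (so that the factor of $n$ picked up from differentiating can be absorbed into $\rho_n$) and of the lattice-point count (likely a direct volume argument that is uniform in the location of the parallelogram), together with a careful choice of $Q_0(n,\gamma,\mathbf{d})$ so that the boundary term $O((B_1+B_2)/|d_1-d_2|^{1/2})$ is a prescribed fraction of the leading term. Beyond these bookkeeping issues, the structure of the argument is entirely linear and does not require any deeper irrationality or transcendence input.
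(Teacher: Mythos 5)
Your proposal is correct and follows essentially the same route as the paper: bound $|P(d_i)|$ via a Taylor/mean-value estimate using $P(\alpha_i)=0$, fix the high coefficients $(a_2,\dots,a_n)$, and count integer pairs $(a_0,a_1)$ in the resulting parallelogram $\{|a_1 d_i + a_0 + R_i|\le B_i\}$ (the paper's Lemma~\ref{lm6}), then multiply by $(2Q+1)^{n-1}\cdot n^2$. The only cosmetic difference is that the paper phrases this as a contradiction via pigeonhole on the classes $\Pl_n(Q,\mathbf{A}_1)$, while you sum over classes directly — logically equivalent, and your direct version is arguably cleaner.
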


It follows from Theorem \ref{th0} that for $1<\gamma_{1}+\gamma_{2}< 2$ we cannot obtain the estimate (\ref{eq00}) for all rectangles $\Pi$. In particular, it is easy to show that certain neighborhoods of algebraic points of small height and small degree do not contain any other algebraic points $\boldsymbol{\alpha}\in\A_n^2(Q)$. This leads us to the definition of a set of small rectangles that are not affected by these ``anomalous'' points. Now let us introduce the concept of a {\it $(v_1,v_2)$-special} square.

\begin{definition}
Let $\Pi=I_1\times I_2$ be a square with midpoint $\mathbf{d}$, $d_1\neq d_2$ and sides $\mu_1 I_1 = \mu_1 I_2 = c_8Q^{-\gamma}$ such that $\frac12< \gamma< 1$. We shall say that the square $\Pi$ satisfies the {\it $(l,v_1,v_2)$-condition} if $v_1+v_2=1$ and there exist not more than $\delta^3\cdot 2^{l+3}Q^{1+2\lambda_{l+1}}\mu_2\Pi$ polynomials $P\in\Pl_2(Q)$ of the form $P(t)=a_2t^2+a_1t+a_0$ satisfying the inequalities  
\[
\begin{cases}
|P(x_{0,i})|<h\cdot Q^{-v_i},\quad i=1,2,\\
\delta Q^{\lambda_{l+1}}\leq|a_2|<\delta Q^{\lambda_l}
\end{cases}
\]
for some point $\mathbf{x}_0\in \Pi$, where $\delta = 2^{-L-17}h^{-2}\cdot (d_1-d_2)^2$, $L=\left[\textstyle\frac{3-2\gamma}{1-\gamma}\right]$ and
\begin{equation}\label{eq2_90}
\lambda_l=
\begin{cases}
1-\frac{(l-1)(1-\gamma)}{2},\quad 1\leq l\leq L+1,\\
\gamma-\frac12,\quad l= L+2,\\
0,\quad l\ge L+3.
\end{cases}
\end{equation}
\end{definition}

\begin{definition}
The square $\Pi=I_1\times I_2$ with sides $\mu_1 I_1 = \mu_1 I_2 = c_8Q^{-\gamma}$ such that $\frac12< \gamma< 1$ is called {\it $(v_1,v_2)$-special} square if it satisfies the {\it $(l,v_1,v_2)$-condition} for all $1\leq l\leq L+2$.
\end{definition}

The following theorem can be proved for  {\it $(v_1,v_2)$-special} squares.

\begin{theorem}\label{th2}
For all {\it $\left(\textstyle\frac12,\textstyle\frac12\right)$-special} squares $\Pi=I_1\times I_2$ with midpoints $\mathbf{d}$, $d_1\neq d_2$ and sides $\mu_1 I_1 = \mu_1 I_2 =c_8Q^{-\gamma}$, where $\frac12< \gamma< 1$ and $c_8>c_0(n,\mathbf{d})$, there exists a value $c_{13}=c_{13}(n,\mathbf{d},\gamma)>0$ such that
\[
\#\A_n^2(Q,\Pi)> c_{13}\cdot Q^{n+1}\mu_2\Pi
\]
for $Q>Q_0(n,\mathbf{d},\gamma)$.
\end{theorem}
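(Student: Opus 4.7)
I would prove Theorem~\ref{th2} by a Minkowski-type counting argument: produce many integer polynomials $P\in\Pl_n(Q)$ with small values at a fine grid of points inside $\Pi$, convert each ``non-degenerate'' polynomial into a genuine algebraic point $\boldsymbol{\alpha}\in\Pi$, and then use the $(l,\tfrac12,\tfrac12)$-conditions to control the degenerate ones. Cover $\Pi$ by a grid of subsquares $\Pi'$ of side $r\asymp Q^{-3/2}$ and let $\mathbf{x}_0=(x_{0,1},x_{0,2})$ be the midpoint of a generic $\Pi'$. For each $\mathbf{x}_0$, apply Minkowski's theorem on linear forms to the symmetric convex body in $\Z^{n+1}$ defined by
\[
|P(x_{0,1})|\leq hQ^{-1/2},\qquad |P(x_{0,2})|\leq hQ^{-1/2},\qquad |a_j|\leq Q\ (2\leq j\leq n).
\]
Because $d_1\neq d_2$ gives $x_{0,1}\neq x_{0,2}$ with gap bounded below, the functionals $P\mapsto P(x_{0,i})$ are linearly independent and the convex body has volume $\asymp h^2 Q^{n-2}$, so the lattice-point count per subsquare is of this order. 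Summing over the $\asymp Q^{3}\mu_2\Pi$ subsquares, and noting each polynomial is caught by only $O(1)$ subsquares, yields $\asymp h^2 Q^{n+1}\mu_2\Pi$ distinct polynomials.

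\textbf{From polynomials to algebraic points.} For each such $P$, let $\alpha_i$ be the real root of $P$ nearest $x_{0,i}$; the mean value inequality gives $|\alpha_i-x_{0,i}|\leq |P(x_{0,i})|/|P'(\alpha_i)|$, so if $|P'(\alpha_i)|\geq c_0 Q$ then $|\alpha_i-x_{0,i}|\ll Q^{-3/2}\asymp r$ and $\boldsymbol{\alpha}=(\alpha_1,\alpha_2)\in\Pi$. The hypotheses $|d_1-d_2|>0$ and $c_8>c_0(n,\mathbf{d})$ ensure $\alpha_1\neq\alpha_2$, so $\boldsymbol{\alpha}$ is a genuine algebraic point; moreover, by Gauss's lemma each algebraic point arises from $O(1)$ polynomials (all divisible by its minimal polynomial). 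Hence each non-degenerate polynomial contributes (up to an $O(1)$ multiplicity) one algebraic point in $\Pi$.

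\textbf{Bounding degenerate polynomials.} The technical heart is to discard those $P$ with $|P'(\alpha_i)|<c_0 Q$. Following the standard Sprindzuk--Bernik reduction, any such $P$ is associated to a degree-$2$ approximant $R(t)=a_2 t^2+a_1 t+a_0$ with $|R(x_{0,i})|<hQ^{-1/2}$ and leading coefficient in a dyadic range $[\delta Q^{\lambda_{l+1}},\delta Q^{\lambda_l})$ --- precisely the class bounded by the $(l,\tfrac12,\tfrac12)$-condition, which caps it by $\delta^3\cdot 2^{l+3}Q^{1+2\lambda_{l+1}}\mu_2\Pi$. Summing the resulting geometric series over $1\leq l\leq L+2$, and handling the tail $l\geq L+3$ (where $\lambda_l=0$ and $R$ is essentially linear) by the resultant argument of Theorem~\ref{th0} using $d_1\neq d_2$ and Feldman's Lemma, bounds the total number of degenerate polynomials by a strictly smaller fraction of the Minkowski count $\asymp h^2 Q^{n+1}\mu_2\Pi$. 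The surviving non-degenerate polynomials then produce $\#\A_n^2(Q,\Pi)>c_{13}Q^{n+1}\mu_2\Pi$ as required.

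\textbf{Expected obstacle.} The main difficulty lies in the third step: the reduction from an arbitrary degenerate $P$ of degree $n$ to a degree-$2$ polynomial with leading coefficient in the prescribed scale, together with the delicate coordination of the constants $\delta, h, c_0, c_8$ so that the telescoping sum $\sum_l \delta^3\cdot 2^{l+3}Q^{1+2\lambda_{l+1}}\mu_2\Pi$ falls strictly below the Minkowski count for every $\gamma\in(\tfrac12,1)$. The transition class $l=L+2$, where $\lambda_l=\gamma-\tfrac12$, and the linear tail $l\geq L+3$ will most likely require separate resultant-based arguments matching the mechanism of Theorem~\ref{th0}.
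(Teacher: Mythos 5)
Your proposal shares the paper's high-level ingredients --- Minkowski's linear forms theorem to produce polynomials small at two points, and the $(l,\tfrac12,\tfrac12)$-conditions to control degenerate polynomials --- but the way you assemble them has gaps that the paper's actual structure is precisely designed to avoid.

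The most serious problem is the step ``the lattice-point count per subsquare is of this order [$\asymp h^2 Q^{n-2}$].'' Minkowski's linear forms theorem gives \emph{existence of one} nonzero lattice point when the volume exceeds $2^{n+1}$; it does not give a count comparable to the volume. The body here has two very thin directions (side $\asymp hQ^{-1/2}\ll 1$), so there is no a priori reason for the number of lattice points to be of order the volume. With one polynomial per subsquare and $\asymp Q^{3}\mu_2\Pi$ subsquares of side $Q^{-3/2}$, you obtain only $\asymp Q^{3}\mu_2\Pi$ polynomials, which for $n>2$ is far short of the required $Q^{n+1}\mu_2\Pi$. (Your exponents $Q^{-1/2}$ and $Q^{-3/2}$ are in fact the $n=2$ specialisation; for general $n$ one needs $|P(x_{0,i})|\leq h_n Q^{-(n-1)/2}$ and grid spacing $Q^{-(n+1)/2}$, which is exactly what the paper does.) The paper sidesteps the ``counting lattice points'' issue entirely: Minkowski is used only for existence at \emph{every} point of $\Pi$, and the counting is done at the end by a \emph{measure-packing} argument --- a maximal disjoint family of algebraic-point neighbourhoods whose expansions cover the set $B_1$ of ``good'' points, together with the lower bound $\mu_2 B_1\geq\frac34\mu_2\Pi$.

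That lower bound on $\mu_2 B_1$ is the real heart of the theorem, and it is where the second gap lies. You describe the degenerate-polynomial control as ``follow the standard Sprindzuk--Bernik reduction,'' but in the paper this is Lemma~\ref{lm7}: a several-page argument decomposing the derivative range into intervals $T_{i,1},T_{i,2},T_{i,3}$, an induction on the polynomial degree using essential/non-essential domains (the very step that motivates the $(l,v_1,v_2)$-condition), a separate use of Lemma~\ref{lm3} for the intermediate range $T_{i,2}$ and for the small-derivative range $T_{i,1}$ via Sprindzuk's root-clustering classification, and a wholly separate treatment of reducible polynomials. Your sketch collapses all of this to ``a degree-2 approximant $R$ with leading coefficient in a dyadic range,'' which is only the base case of the induction, not the whole reduction. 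Your related claim that ``each algebraic point arises from $O(1)$ polynomials by Gauss's lemma'' is also false in the form stated: an algebraic point whose minimal polynomial has degree $m<n$ arises from every multiple $P_\alpha\cdot R$ with $\deg R\leq n-m$ and suitable height, and this is exactly why the paper devotes a separate subsection ($L_3$, using Lemma~\ref{lm4}) to reducible polynomials rather than dismissing them via multiplicity. Finally, the paper does not handle any $l\geq L+3$ tail with the resultant argument of Theorem~\ref{th0}; instead it shows directly, from the special-square definition, that essentially no quadratic with $|a_2|<\delta Q^{\gamma-1/2}$ can occur.
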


\section{Auxiliary statements}

For a polynomial $P$ with roots $\alpha_1,\ldots,\alpha_n$ let $S(\alpha_i) := \left\{x\in\R : |x-\alpha_i| = \min\limits_{1\le j\le n} |x-\alpha_j|\right\}$.
Furthermore, from now on, we assume that the roots of the polynomial $P$ are sorted by distance from $\alpha_i=\alpha_{i,1}$:
\[
|\alpha_{i,1}-\alpha_{i,2}|\le|\alpha_{i,1}-\alpha_{i,3}|\le\ldots\le|\alpha_{i,1}-\alpha_{i,n}|.
\]

\begin{lemma}\label{lm1}
Let $x\in S(\alpha_i)$. Then
\begin{align*}
&|x-\alpha_i| \le n |P(x)|\cdot|P'(x)|^{-1},\quad |x-\alpha_i| \le 2^{n-1} |P(x)|\cdot|P'(\alpha_i)|^{-1},\\
&|x-\alpha_i| \le \min_{1\le j\le n} \left(2^{n-j} |P(x)|\cdot|P'(\alpha_i)|^{-1}\cdot
|\alpha_{i,1}-\alpha_{i,2}|\ldots|\alpha_{i,1}-\alpha_{i,j}| \right)^{1/j}.
\end{align*}
\end{lemma}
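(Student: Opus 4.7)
The plan is to exploit the factorization $P(x)=a_n\prod_{k=1}^n(x-\alpha_k)$, which yields both $P(x)=a_n(x-\alpha_i)\prod_{k\neq i}(x-\alpha_k)$, $P'(\alpha_i)=a_n\prod_{k\neq i}(\alpha_i-\alpha_k)$, and the logarithmic-derivative identity $P'(x)/P(x)=\sum_{k=1}^n(x-\alpha_k)^{-1}$. Each of the three inequalities reduces, once these are substituted, to a comparison of products of the form $|x-\alpha_k|$ versus $|\alpha_i-\alpha_k|$, and the only tools needed are the defining property $|x-\alpha_i|\leq|x-\alpha_k|$ for $x\in S(\alpha_i)$ together with the triangle inequality.

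For the first inequality I would multiply the identity $|P'(x)/P(x)|\leq\sum_k|x-\alpha_k|^{-1}$ through by $|x-\alpha_i|$; using $|x-\alpha_i|\leq|x-\alpha_k|$ each of the $n$ summands is at most $1$, so the total is at most $n$, and rearranging yields (1).

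For the second inequality, dividing the factorizations gives the clean identity
\[
\frac{|P(x)|}{|x-\alpha_i|\,|P'(\alpha_i)|}=\prod_{k\neq i}\frac{|x-\alpha_k|}{|\alpha_i-\alpha_k|}.
\]
The triangle inequality $|\alpha_i-\alpha_k|\leq|\alpha_i-x|+|x-\alpha_k|\leq 2|x-\alpha_k|$, whose last step uses $x\in S(\alpha_i)$, shows each factor is at least $\tfrac12$, hence the product is at least $2^{-(n-1)}$, which rearranges to (2).

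For the third inequality the strategy is the same but requires careful bookkeeping. After substituting the factorizations and relabelling $\alpha_k\mapsto\alpha_{i,m}$ for $m=2,\dots,n$, the target reduces to
\[
|x-\alpha_{i,1}|^{j-1}\prod_{m=j+1}^n|\alpha_{i,1}-\alpha_{i,m}|\leq 2^{n-j}\prod_{m=2}^n|x-\alpha_{i,m}|.
\]
I would then split the right-hand product at $m=j$: for $2\leq m\leq j$ the ordering $|x-\alpha_{i,1}|\leq|x-\alpha_{i,m}|$ absorbs the $j-1$ factors of $|x-\alpha_{i,1}|$ on the left, while for $j+1\leq m\leq n$ the same triangle-inequality estimate as in (2) gives $|\alpha_{i,1}-\alpha_{i,m}|\leq 2|x-\alpha_{i,m}|$, producing the prefactor $2^{n-j}$ and absorbing the remaining $|\alpha_{i,1}-\alpha_{i,m}|$ factors. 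Taking $j$-th roots yields the claim. There is no real analytic obstacle; the only delicate point is choosing the split index so that exactly the factors $|\alpha_{i,1}-\alpha_{i,2}|\cdots|\alpha_{i,1}-\alpha_{i,j}|$ stated in the lemma survive on the left and the remaining $n-j$ exponents of $2$ match the stated $2^{n-j}$.
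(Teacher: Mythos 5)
Your proof is correct. The paper itself only spells out the first inequality (with exactly the same logarithmic-derivative argument you give) and refers to Sprind\v{z}uk and Bernik for the other two; what you supply for the second and third is precisely the standard argument in those references, resting on the factorizations $P(x)=a_n\prod_k(x-\alpha_k)$, $P'(\alpha_i)=a_n\prod_{k\neq i}(\alpha_i-\alpha_k)$, the defining property $|x-\alpha_i|\le|x-\alpha_k|$ of $S(\alpha_i)$, and the triangle bound $|\alpha_i-\alpha_k|\le 2|x-\alpha_k|$. Your bookkeeping in the third inequality (splitting the index at $m=j$ so that $j-1$ factors are absorbed by the ordering and $n-j$ factors each contribute a $2$) is exactly right; note also that the second inequality is just the $j=1$ case of the third, so you have really proved two facts rather than three.
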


The first inequality follows from the inequality $|P'(x)|\cdot |P(x)|^{-1}\leq\sum\limits_{j=1}^n|x-\alpha_{i,j}|^{-1}\leq n|x-\alpha_{i,1}|^{-1}$. For a proof of the second and the third inequalities see \cite{Spr67, Bern83}.

\begin{lemma}[see \cite{Bern80}]\label{lm2}
Let $I\subset\R$ be an interval and let $A\subset I$ be a measurable set,
$\mu_1 A\ge \frac12 \mu_1 I$. If for all $x\in A$ the inequality
$|P(x)|<c_{14}\cdot Q^{-w}$ holds for some $w>0$, then 
\[
|P(x)| < 6^n(n+1)^{n+1}\cdot c_{14}\cdot Q^{-w}
\]
for all points $x\in I$, where $n = \deg P$.
\end{lemma}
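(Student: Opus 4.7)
The plan is to derive the bound via Lagrange interpolation at $n+1$ suitably separated points of $A$. Since $\deg P \le n$, the polynomial $P$ is determined by its values at any $n+1$ distinct points and admits the explicit representation
\[
P(x) = \sum_{i=0}^n P(x_i) \prod_{j \ne i} \frac{x - x_j}{x_i - x_j}.
\]
The strategy is to choose $x_0,\ldots,x_n \in A$ so that (i) each $|P(x_i)| \le c_{14} Q^{-w}$, which is automatic from the hypothesis, and (ii) the distances $|x_i - x_j|$ are at least a positive fraction of $\mu_1 I$. Plugging these bounds into the Lagrange formula then yields the desired estimate uniformly on $I$.

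Writing $L := \mu_1 I$ and translating $I$ to $[0,L]$, I would partition $I$ into $2(n+1)$ equal subintervals of length $L/(2(n+1))$. A direct pigeonhole argument shows that $A$ meets at least $n+1$ of these subintervals: otherwise $\mu_1 A \le n \cdot L/(2(n+1)) < L/2$, contradicting the hypothesis. Pick one point $x_i \in A$ from each of $n+1$ intersecting subintervals and order them so that $x_0 < x_1 < \cdots < x_n$. A slight refinement — working with a partition into $4(n+1)$ parts and selecting intersecting subintervals with pairwise index gap at least $2$ — lets us arrange $|x_i - x_j| \ge c\, L/(n+1)$ for an absolute constant $c > 0$, regardless of how $A$ is distributed inside $I$.

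With these well-separated nodes, the Lagrange basis polynomials are easy to control: each numerator factor obeys $|x - x_j| \le L$, so the full numerator is at most $L^n$, while the denominator is at least $(c\,L/(n+1))^n \cdot i!(n-i)!$ once the $k$-th nearest neighbor contribution is tracked. Summing the $n+1$ terms, collapsing the resulting binomial sum, and invoking $|P(x_i)| \le c_{14}\, Q^{-w}$ produces a bound of the shape $|P(x)| \le c_{14}\, Q^{-w}\cdot (n+1)(c^{-1}(n+1))^n$, which is absorbed into $6^n(n+1)^{n+1}$ with substantial slack.

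The main obstacle is the selection step: ensuring that the chosen points are genuinely far apart, rather than merely lying in distinct subintervals (since points in adjacent subintervals can sit essentially at the shared endpoint). This is handled by the finer partition and the gap-$\ge 2$ selection described above; once the separation is secured, the remainder is a routine Lagrange estimate, and the factor $6^n(n+1)^{n+1}$ in the conclusion is wide enough to accommodate both the separation losses and the contribution of the binomial sum.
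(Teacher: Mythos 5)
Your argument is correct and is essentially the standard proof (and, to the best of my knowledge, the one in the cited reference \cite{Bern80}): Lagrange interpolation at $n+1$ nodes of $A$ whose mutual separation is forced by a pigeonhole count on a uniform partition of $I$. Your refinement to a $4(n+1)$-part partition with gap-$\ge 2$ selection is the right fix for the near-coincident-nodes issue, and a direct count confirms it works: $A$ meets at least $2(n+1)$ of the subintervals (else $\mu_1 A < \mu_1 I/2$), and taking every other marked subinterval yields $n+1$ of them with pairwise index gap $\ge 2$, hence node separations $|x_i-x_j|\ge |i-j|\cdot \mu_1 I/(4(n+1))$. The resulting Lagrange bound $\sum_i (4(n+1))^n/(i!(n-i)!) = 2^n(4(n+1))^n/n! = (8(n+1))^n/n!$ is comfortably below $6^n(n+1)^{n+1}$ for all $n\ge 1$, since the ratio is $(4/3)^n/(n+1)!\le 1$.
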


\begin{lemma}[see \cite{Per87}]\label{lm3}
Let $\delta$, $\eta_1$, $\eta_2$ be real positive numbers,
and let $P_1, P_2 \in \Z[t]$ be irreducible polynomials of degrees at most  $n$ such that
$\max\left(H(P_1), H(P_2)\right) < K$. Let $J_i\subset \R$, $i=1,2$ be intervals of sizes $\mu_1 J_i=K^{-\eta_i}$. If for some $\tau_1, \tau_2>0$ and for all $\mathbf{x}\in J_1\times J_2$ the inequalities $\max\left(|P_1(x_i)|, |P_2(x_i)|\right) < K^{-\tau_i}$ hold, then 
\begin{equation}\label{eq2_2}
\tau_1+\tau_2+2 + 2\max(\tau_1+1-\eta_1, 0) + 2\max(\tau_2+1-\eta_2, 0) < 2n+\delta
\end{equation}
for $K>K_0(\delta)$.
\end{lemma}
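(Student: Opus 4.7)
The plan is to derive the inequality by contrasting the integrality of the resultant $R:=R(P_1,P_2)$ with a sharp upper bound on $|R|$. Since $P_1$ and $P_2$ are irreducible in $\Z[t]$, they share no common root (the degenerate case $P_1=P_2$ reduces to a single-polynomial Sprindzuk-type estimate applied simultaneously on $J_1$ and $J_2$ and can be handled separately), so $R$ is a nonzero integer and $|R|\ge 1$. The core task is to establish a matching upper bound.

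For that, I would use the factorization
\[
|R| = |a|^{\deg P_2}\prod_{i=1}^{\deg P_1}|P_2(\alpha_i)|,
\]
where $a$ and $\alpha_i$ are the leading coefficient and roots of $P_1$. Identify the two roots $\alpha^{(1)},\alpha^{(2)}$ of $P_1$ closest to $J_1$ and $J_2$ respectively; these are distinct under the standing assumption that $J_1$ and $J_2$ are disjoint (a reduction we may impose by shrinking one interval if necessary). For each $\ell$, bound $|P_2(\alpha^{(\ell)})|$ by a Taylor expansion of $P_2$ around a point $x_\ell\in J_\ell$: the zeroth-order term is $<K^{-\tau_\ell}$ by hypothesis, and the error term is controlled via Lemma~\ref{lm1}, which bounds $|\alpha^{(\ell)}-x_\ell|$ in terms of $|P_1(x_\ell)|$ and $|P_1'(x_\ell)|$, together with the height bound $|P_2^{(k)}|\ll K$ for higher Taylor coefficients. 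The remaining $\deg P_1-2$ factors in the product and the leading-coefficient factor $|a|^{\deg P_2}$ are bounded trivially by $K$-powers and supply the $K^{2n}$ that matches the $2n$ in the final exponent, up to the slack $\delta$.

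The correction terms $2\max(\tau_\ell+1-\eta_\ell,0)$ reflect a dichotomy between a ``short-interval'' regime $\eta_\ell\ge\tau_\ell+1$, where the interval $J_\ell$ is no longer than the natural approximation radius $K^{-\tau_\ell-1}$ and the trivial bound $|P_1'(\alpha^{(\ell)})|\ll K$ is already optimal, and a ``long-interval'' regime $\eta_\ell<\tau_\ell+1$, where first extending the smallness of $P_1$ from $J_\ell$ to a slightly enlarged interval via Lemma~\ref{lm2} and then applying Markov's inequality for polynomials of degree $n$ forces $|P_1'(\alpha^{(\ell)})|\ll K^{\eta_\ell-\tau_\ell}$, strictly below $K$. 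This sub-$K$ derivative estimate, when fed back into Lemma~\ref{lm1} and into the Taylor error estimates of both $P_1$ and $P_2$, produces the loss $K^{2(\tau_\ell+1-\eta_\ell)}$ in the exponent. Combining the contributions yields
\[
1\le|R|\ll K^{2n-\tau_1-\tau_2-2-2\max(\tau_1+1-\eta_1,0)-2\max(\tau_2+1-\eta_2,0)+\delta/2},
\]
and taking $\log_K$ of both sides gives the lemma once $K>K_0(\delta)$ is large enough to absorb polynomial-in-$n$ constants into $\delta/2$.

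The principal technical obstacle, as I see it, is pinning down the coefficient $2$ in each correction term. The derivative bound $|P_1'(\alpha^{(\ell)})|\ll K^{\eta_\ell-\tau_\ell}$ must be propagated through both the root-approximation of Lemma~\ref{lm1} and the Taylor expansion of $P_2$ without double-counting and without losing to cancellations in the higher-order Taylor coefficients, and confirming that the net loss is $K^{2(\tau_\ell+1-\eta_\ell)}$ rather than $K^{\tau_\ell+1-\eta_\ell}$ is precisely the quantitative step on which the strength of the lemma in applications rests.
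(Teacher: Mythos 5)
The paper does not prove Lemma~\ref{lm3}: it is quoted from Pereverzeva's paper \cite{Per87} as a known auxiliary result, so there is no ``paper's own proof'' here to compare against. Your resultant strategy is nonetheless the standard one for results of this type and is almost certainly what underlies \cite{Per87}, and the identification of the correction terms $2\max(\tau_\ell+1-\eta_\ell,0)$ with a short-interval/long-interval dichotomy is correctly aimed. But the proposal, as written, has a directional error at exactly the step you flag as the crux, and one minor misuse of a lemma.

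The directional error: you derive $|P_1'(\alpha^{(\ell)})|\ll K^{\eta_\ell-\tau_\ell}$ in the long-interval case and then say that feeding this sub-$K$ derivative bound back into Lemma~\ref{lm1} and the Taylor expansions ``produces the loss $K^{2(\tau_\ell+1-\eta_\ell)}$.'' But a \emph{smaller} value of $|P_1'(\alpha^{(\ell)})|$ gives a \emph{larger} bound on $|\alpha^{(\ell)}-x_\ell|$ in Lemma~\ref{lm1}, hence a larger Taylor error and a larger estimate for $|P_2(\alpha^{(\ell)})|$; that pushes the upper bound on $|R|$ \emph{up}, which is the wrong direction for a contradiction with $|R|\ge 1$. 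The genuine savings in the long-interval regime come from a different mechanism: when $P_1$ and $P_2$ and their first few derivatives are all forced to be small on $J_\ell$ (via Markov/Lagrange interpolation on $J_\ell$), each of $P_1,P_2$ has a \emph{cluster} of at least two roots near $J_\ell$, and in the symmetric factorization $|R|=|a_1|^{n_2}|a_2|^{n_1}\prod_{i,j}|\alpha_i-\beta_j|$ one then picks up several small cross-factors $|\alpha_i-\beta_j|$ from pairings inside the cluster, which is what yields the factor $2$ per interval. Your plan, which isolates a single root of $P_1$ per interval and tracks only one small factor, cannot produce that factor $2$; you say so yourself, but it should be stated that it is not merely a bookkeeping obstacle, it is a missing idea. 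Separately, Lemma~\ref{lm2} as stated does not ``extend the smallness to a slightly enlarged interval''; it only propagates a bound from a subset of measure $\ge\frac12\mu_1 I$ to all of a \emph{fixed} interval $I$. What your outline actually needs is a direct Markov- or interpolation-type derivative estimate on $J_\ell$, not Lemma~\ref{lm2}.
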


\begin{lemma}[see \cite{Spr67}]\label{lm4}
Let $P\in\Z[t]$ be a reducible polynomial, $P=P_1\cdot P_2$, $\deg P = n\ge 2$. Then 
\[
H(P_1) H(P_2) \asymp H(P).
\]
\end{lemma}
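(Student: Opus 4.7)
The statement is Gelfond's classical inequality relating the height of a product of integer polynomials to the heights of its factors. The plan is to establish the two inequalities $H(P)\ll H(P_1)H(P_2)$ and $H(P_1)H(P_2)\ll H(P)$ separately, with implicit constants depending only on $n$.

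The direction $H(P)\ll H(P_1)H(P_2)$ is elementary. Writing $P_1(t)=\sum_{i=0}^m a_i t^i$ and $P_2(t)=\sum_{j=0}^{n-m} b_j t^j$, the $k$-th coefficient of $P=P_1P_2$ equals $\sum_{i+j=k} a_i b_j$, which is a sum of at most $\min(m,n-m)+1\le n+1$ terms, each bounded by $H(P_1)H(P_2)$. Hence $H(P)\le (n+1)H(P_1)H(P_2)$.

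The nontrivial direction $H(P_1)H(P_2)\ll H(P)$ I would prove via the Mahler measure $M(Q):=|a_{\deg Q}|\prod_{i}\max(1,|\beta_i|)$, where $\beta_i$ are the complex roots of $Q$. Two facts are needed. First, $M$ is multiplicative, so $M(P_1)M(P_2)=M(P)$; this is immediate from the definition, since factoring $P$ over $\Comp$ partitions its roots between $P_1$ and $P_2$ while leading coefficients multiply. Second, one has the two-sided comparison
\[
2^{-\deg Q}\,H(Q)\le M(Q)\le \sqrt{\deg Q+1}\cdot H(Q)
\]
for any $Q\in\Z[t]$. The upper bound on $M(Q)$ follows from Landau's inequality, a consequence of applying Jensen's formula to $\log|Q(e^{i\theta})|$; the lower bound is obtained by writing each coefficient of $Q$ as an elementary symmetric function of the roots, which gives $|a_j|\le\binom{\deg Q}{j}M(Q)\le 2^{\deg Q}M(Q)$. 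Combining these,
\[
H(P_1)H(P_2)\le 2^{\deg P_1}M(P_1)\cdot 2^{\deg P_2}M(P_2)=2^n M(P)\le 2^n\sqrt{n+1}\cdot H(P).
\]

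Together the two bounds give $H(P_1)H(P_2)\asymp H(P)$ with $n$-dependent constants. The main obstacle is the lower direction, and within it the height-measure comparison; the multiplicativity of $M$ is free from the definition, while the Landau inequality is a standard analytic input one simply invokes.
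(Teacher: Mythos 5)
Your proof is correct. The paper itself gives no proof of Lemma~\ref{lm4}: it is stated with a citation to Sprind\v{z}uk's monograph, so there is no in-text argument to compare against. What you have supplied is the standard modern derivation of Gelfond's inequality via the Mahler measure, and all the pieces check out. The easy direction $H(P)\le (n+1)H(P_1)H(P_2)$ is the Cauchy product bound; the hard direction uses multiplicativity of $M$ over $\Comp$-factorizations, Landau's inequality $M(Q)\le\bigl(\sum_j |a_j|^2\bigr)^{1/2}\le\sqrt{\deg Q+1}\,H(Q)$ for the upper comparison, and the elementary-symmetric-function bound $|a_j|\le\binom{\deg Q}{j}M(Q)$ for the lower comparison, and your chaining $H(P_1)H(P_2)\le 2^{\deg P_1+\deg P_2}M(P)\le 2^n\sqrt{n+1}\,H(P)$ is exactly right since $\deg P_1+\deg P_2=n$. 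This is essentially the same route one finds in the cited source and in Bugeaud's book; an alternative and slightly more elementary path (avoiding Jensen's formula) bounds the coefficients of the factors directly by products of $\max(1,|\beta_i|)$ over the roots and invokes Mahler's or Gelfond's original lemma, but the Mahler-measure formulation you chose is cleaner and yields explicit constants. No gaps.
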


\begin{lemma}[see \cite{Fel51}]\label{lm5}
For any subset of roots $\alpha_{i_1},\ldots,\alpha_{i_s}$, $1\leq s\leq n$, of the polynomial $P(t)=a_nt^n+\ldots+a_1t+a_0$ we have $\prod\limits_{j=1}^{s}|\alpha_{i_j}|\leq (n+1)2^n\cdot H(P)\cdot |a_n|^{-1}$.  
\end{lemma}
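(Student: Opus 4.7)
The plan is to reduce $\prod_{j=1}^{s}|\alpha_{i_j}|$ to the Mahler measure of $P$ and then bound the Mahler measure by the height. Define
\[
M(P):=|a_n|\prod_{i=1}^{n}\max(1,|\alpha_i|).
\]
For every $j$ one has $|\alpha_{i_j}|\le\max(1,|\alpha_{i_j}|)$, and the factors $\max(1,|\alpha_i|)$ corresponding to the $n-s$ omitted roots are all $\ge 1$, so
\[
\prod_{j=1}^{s}|\alpha_{i_j}|\;\le\;\prod_{j=1}^{s}\max(1,|\alpha_{i_j}|)\;\le\;\prod_{i=1}^{n}\max(1,|\alpha_i|)\;=\;\frac{M(P)}{|a_n|}.
\]
This step is the heart of the matter: the specific subset $\{i_1,\ldots,i_s\}$ is absorbed into an invariant of $P$ alone.

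It remains to bound $M(P)$ by $H(P)$. Using Jensen's formula
\[
\log M(P)=\int_{0}^{1}\log\bigl|P(e^{2\pi i\theta})\bigr|\,d\theta,
\]
Jensen's inequality for the concave function $\log$, and the triangle inequality for the integrand, one obtains Landau's bound
\[
M(P)\;\le\;\int_{0}^{1}\bigl|P(e^{2\pi i\theta})\bigr|\,d\theta\;\le\;\|P\|_1\;=\;\sum_{k=0}^{n}|a_k|\;\le\;(n+1)H(P).
\]
Combining the two steps yields $\prod_{j=1}^{s}|\alpha_{i_j}|\le(n+1)H(P)/|a_n|$, which is actually a little stronger than the stated inequality; the factor $2^n$ in the lemma is harmless slack.

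There is essentially no technical obstacle, only a choice of which classical inequality to quote. A purely elementary route avoiding Mahler measure seems awkward to push through for general $s$, because Cauchy's root bound alone gives the much weaker estimate $\prod|\alpha_{i_j}|\ll(H(P)/|a_n|)^{s}$, and the individual product cannot be bounded directly by the elementary symmetric function $|e_s(\alpha_1,\ldots,\alpha_n)|=|a_{n-s}/a_n|\le H(P)/|a_n|$ because of cancellations among its $\binom{n}{s}\le 2^n$ terms. For this reason I would present the proof along the Mahler-measure lines above; the extra factor $2^n$ then serves merely to round the constant and spares one from having to quote Jensen's formula with sharp constants.
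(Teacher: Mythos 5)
Your proof is correct. The paper itself gives no proof of this lemma---it simply cites Fel'dman's 1951 article---so there is no in-text argument to compare against. Your route through the Mahler measure $M(P)=|a_n|\prod_i\max(1,|\alpha_i|)$ is the standard modern one: the two steps $\prod_{j=1}^{s}|\alpha_{i_j}|\le M(P)/|a_n|$ and $M(P)\le\sum_k|a_k|\le(n+1)H(P)$ (the latter via Jensen's formula, concavity of $\log$, and the triangle inequality on the unit circle) are both sound, and they yield $\prod_{j=1}^{s}|\alpha_{i_j}|\le(n+1)H(P)/|a_n|$, which is strictly sharper than the stated bound by a factor of $2^n$. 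Your remark about why neither Cauchy's root bound applied term by term nor a naive comparison with the elementary symmetric function $|a_{n-s}/a_n|$ suffices is also accurate: the former loses a factor exponential in $s$, and the latter fails because of cancellation among the $\binom{n}{s}$ monomials. Since the paper's $2^n$ is harmless slack (and plausibly reflects the constants in Fel'dman's original, more elementary treatment), presenting the Mahler-measure argument is a reasonable and clean choice; one could equally cite Landau's $L^2$ inequality $M(P)\le\bigl(\sum_k|a_k|^2\bigr)^{1/2}\le\sqrt{n+1}\,H(P)$ for an even better constant, but that refinement is not needed here.
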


\begin{lemma}\label{lm6}
Let $G=G(\mathbf{d},\mathbf{K})$, where $|d_1-d_2|>\varepsilon_1>0$, be a set of points $\mathbf{b}=(b_1,b_0)\in\Z^2$ such that
\begin{equation}\label{eq2_3}
|b_1d_i+b_0|\leq K_i,\quad i=1,2.
\end{equation}
Then
\[
\# G\leq \left(4\varepsilon_1^{-1}K_1+1\right)\cdot\left(4K_2+1\right).
\]
\end{lemma}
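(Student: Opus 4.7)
The plan is to treat the system as two affine constraints in the unknowns $(b_1, b_0) \in \Z^2$ and to count integer solutions by eliminating one variable at a time. First I would bound $|b_1|$ by subtracting the two inequalities in (\ref{eq2_3}) to cancel $b_0$; then, for each admissible integer $b_1$, I would bound the number of admissible $b_0$ directly from one of the original inequalities; finally I would multiply the two counts.

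For the first step, subtraction gives $b_1(d_1 - d_2) = (b_1 d_1 + b_0) - (b_1 d_2 + b_0)$, so the triangle inequality together with $|d_1 - d_2| > \varepsilon_1$ yields $|b_1|\,\varepsilon_1 \le K_1 + K_2$. Relabeling if necessary so that $K_1 \ge K_2$ — which is consistent with the asymmetric form of the claimed bound — I obtain $|b_1| \le 2K_1/\varepsilon_1$, so $b_1$ ranges over at most $4\varepsilon_1^{-1}K_1 + 1$ integer values. For the second step, with $b_1$ fixed the constraint $|b_1 d_2 + b_0| \le K_2$ forces $b_0$ into the closed interval $[-b_1 d_2 - K_2,\, -b_1 d_2 + K_2]$ of length $2K_2$, which contains at most $2K_2 + 1 \le 4K_2 + 1$ integers. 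Multiplying the two counts gives the stated inequality.

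There is no real obstacle here: the proof is a short direct counting estimate. The only points requiring minor care are the relabeling used to pass from $K_1 + K_2$ to $2K_1$ in the bound on $|b_1|$, and the cosmetic looseness $2K_2 + 1 \le 4K_2 + 1$ in the second factor, which together explain why the final bound carries two factors of $4$ rather than two factors of $2$.
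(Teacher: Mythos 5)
Your proposal takes essentially the same route as the paper: subtract the two affine constraints to eliminate $b_0$ and bound $|b_1|$ via $|d_1-d_2|>\varepsilon_1$, then for each admissible $b_1$ count the admissible $b_0$, and multiply. The one genuine (and welcome) difference is in the second step: you bound the $b_0$-interval directly from the single constraint $|b_1 d_2 + b_0|\le K_2$, which already gives at most $2K_2+1$ integers, whereas the paper argues by taking differences $|b_{0,0}-b_{0,j}|\le 2K_2$ between two solutions with the same $b_1$, obtaining an interval of length $4K_2$. Your version is marginally tighter and a bit cleaner; both comfortably land inside the stated factor $4K_2+1$.

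One small caveat worth being aware of: the step ``relabel so that $K_1\ge K_2$'' is not quite innocuous, because the claimed bound is \emph{not} symmetric in $K_1,K_2$ (only $K_1$ carries the factor $\varepsilon_1^{-1}$). Swapping $K_1\leftrightarrow K_2$ (together with $d_1\leftrightarrow d_2$) leaves the set $G$ unchanged but changes the right-hand side of the inequality. What the argument actually proves is
\[
\#G \le \left(4\varepsilon_1^{-1}\max(K_1,K_2)+1\right)\left(4\min(K_1,K_2)+1\right),
\]
which agrees with the stated bound precisely when $K_1\ge K_2$ and is in general larger otherwise. This is, however, exactly the same implicit hypothesis as in the paper's own proof (``Without loss of generality we assume $K_1\ge K_2$''), so you have faithfully reproduced the argument; and in the paper's applications only the product $K_1K_2$ enters the final estimate, so the asymmetry is harmless there.
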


\begin{proof}
Without loss of generality we assume $K_1\ge K_2$.
Consider the system of equations
\begin{equation}\label{eq2_4}
b_1d_i+b_0=l_i,\quad i=1,2,
\end{equation}
in two variables. It is clear that for $|l_i|\leq K_i$ any solution of the system (\ref{eq2_4}) satisfies (\ref{eq2_3}). Thus, our problem is reduced to estimating the number of integer solutions of the system (\ref{eq2_4}) with different values $|l_i|\leq K_i$, $i=1,2$.

Let us consider the difference of equations (\ref{eq2_4}): $b_1(d_1-d_2)=l_1-l_2$. Then for $|l_i|\leq K_1$ we obtain:
\[
|b_1|\leq\left(|l_1|+|l_2|\right)\cdot |d_1-d_2|^{-1}\leq 2\varepsilon_1^{-1}K_1.
\]
This inequality implies that all possible values of $b_{1}$ lie in an interval $J_1=\left(-2\varepsilon_1^{-1}K_1, 2\varepsilon_1^{-1}K_1\right)$.

Let us fix the value of $b_1\in J_1$ and consider the system (\ref{eq2_4}) for two different combinations $(b_1, b_{0,0})$ and $(b_1, b_{0,j})$. In this case, the system \ref{eq2_4} can be transformed as follows:
\[
|b_{0,0}-b_{0,j}|=|l_{1,0}-l_{1,j}|\leq 2K_i,\quad i=1,2.
\]
These inequalities imply that for a fixed $b_1$, all possible values of $b_{0}$ lie in an interval $J_0(b_1)=\left(b_{0,0}-2K_2,\, b_{0,0}+2K_2\right)$. Remembering that $b_1,b_0\in\Z$, we have
\[
\# G\leq \left(\mu_1 J_1+1\right)\cdot\left(\mu_1 J_0+1\right)= \left(4\varepsilon_1^{-1}K_1+1\right)\cdot\left(4K_2+1\right).
\]
\end{proof}

\section{Proof of Theorem \ref{th1}}

Assume that $\#\A_n^2(Q,\Pi) \ge c_{12}\cdot Q^{n+1}\mu_2\Pi$. Taking an algebraic point $\boldsymbol{\alpha}\in\A_n^2(Q,\Pi)$ with a minimal polynomial $P$, let us construct an estimate for the polynomial $P$ at points $d_1,d_2$. Since $\alpha_i\in I_i$, we have:
\[
|P^{(k)}(\alpha_{i})|\leq\sum\limits_{j=k}^{n}{\textstyle\frac{j!}{(j-k)!}\cdot|a_{j}|\cdot |\alpha_i|^{j-k}}< \textstyle\frac{n!}{(n-k)!}\cdot \rho_n(d_i)\cdot Q,
\]
for all $1\leq k\leq n$ and $Q>Q_0$. From these estimates and a Taylor expansion of $P$ in the intervals $I_i$, $i=1,2$ we obtain the following inequality:
\begin{equation}\label{eq3_1}
|P(d_i)|\leq\sum\limits_{k=1}^n{\left|\textstyle\frac{1}{k!}P^{(k)}(\alpha_i)(d_i-\alpha_i)^k\right|}< \sum\limits_{k=1}^n{2^{-k}\textstyle{k\choose n}\rho_n(d_i)\cdot Q\mu_1 I_i}\leq 2^n\rho_n(d_i)\cdot Q\mu_1 I_i.
\end{equation}

Let us fix the vector $\mathbf{A}_{1}=(a_n,\ldots,a_{2})$, where $a_n,\ldots,a_{2}$ are the coefficients of the polynomial $P\in \Pl_{n}(Q)$. Denote by $\Pl_{n}(Q,\mathbf{A}_{1})\subset \Pl_n(Q)$ the subclass of polynomials $P$ with the same vector of coefficients $\mathbf{A}_{1}$ such that $P$ satisfies (\ref{eq3_1}). The number of subclasses $\Pl_{n}(Q,\mathbf{A}_{1})$ is equal to the number of vectors $\mathbf{A}_{1}$, which can be estimated as follows for $Q>Q_0$:
\begin{equation}\label{eq3_2}
\#\{\mathbf{A}_{1}\}=(2Q+1)^{n-1}< 2^{n}\cdot Q^{n-1}.
\end{equation}
It should also be noted that every point of the set $\A_n^2(Q,\Pi)$ corresponds to a polynomial $P\in\Pl_n(Q)$ that satisfies (\ref{eq3_1}). On the other hand, every polynomial $P\in\Pl_n(Q)$ satisfying (\ref{eq3_1}) corresponds to no more than $n^2$ points of the set $\A_n^2(Q,\Pi)$. This allows us to write
\[
c_{11}\cdot Q^{n+1}\mu_2\Pi< \# \A_n^2(Q,\Pi)\leq n^2\sum\limits_{\mathbf{A}_{1}}\# \Pl_n(Q,\mathbf{A}_{1}).
\]
Thus, by the estimate (\ref{eq3_2}) and Dirichlet's principle applied to vectors $\mathbf{A}_{1}$ and polynomials $P$ satisfying (\ref{eq3_1}), there exists a vector $\mathbf{A}_{1,0}$ such that
\begin{equation}\label{eq3_2}
\#\Pl_{n}(Q,\mathbf{A}_{1,0})\ge c_{12}\cdot 2^{-n}n^{-2}Q^{2}\mu_2\Pi.
\end{equation}
Let us find an upper bound for the value $\#\Pl_n(Q,\mathbf{A}_{1,0})$. To do this, we fix some polynomial $P_0\in\Pl_{n}(Q,\mathbf{A}_{1,0})$ and consider the difference between the polynomials $P_0$ and $P_j\in\Pl_{n}(Q,\mathbf{A}_{1,0})$ at points $d_i$, $i=1,2$. From the estimate (\ref{eq3_1}) it follows that
\[
|P_0(d_i)-P_j(d_i)|=|(a_{0,1}-a_{j,1})d_i+(a_{0,0}-a_{j,0})|\leq 2^{n+1}\rho_n(d_i)\cdot Q\mu_1 I_i.
\]
Thus the number of different polynomials $P_j\in\Pl_{n}(Q,\mathbf{A}_{1,0})$ does not exceed the number of integer solutions of the following system:
\[
|b_1d_i+b_0|\leq 2^{n+1}\rho_n(d_i)\cdot Q\mu_1 I_i,\quad i=1,2.
\]
Now let us use Lemma \ref{lm6} for $K_i=2^{n+1}\rho_n(d_i)\cdot Q\mu_1 I_i$. Since $\mu_1 I_i = c_8 Q^{-\gamma_i}$ and $\gamma_i < 1$, we have $K_i\ge 2^{n+1}\rho_n(d_i)c_8\cdot Q^{1-\gamma_i}>\max\{\varepsilon_1,1\}$ for $Q>Q_0$. This implies that
\[
j \leq 2^{2n+8}|d_1-d_2|^{-1}\rho_n(d_1)\rho_n(d_2)\cdot Q^2\mu_2\Pi.
\]
It follows, therefore, that $\#\Pl_{n}(Q,\mathbf{A}_{1,0})\leq 2^{2n+8}|d_1-d_2|^{-1}\rho_n(d_1)\rho_n(d_2)\cdot Q^2\mu_2\Pi$, which contradicts inequality (\ref{eq3_2}) for $c_{12}=2^{3n+9}n^2\rho_n(d_1)\rho_n(d_2)|d_1-d_2|^{-1}$. This leads to
\[
\#\A_n^2(Q,\Pi)< c_{12}\cdot Q^{n+1}\mu_2\Pi.
\]

\section{Proof of Theorem \ref{th2}}

\subsection{The main Lemma}

\begin{lemma}\label{lm7}
Let $\Pi=I_1\times I_2$ be a square with midpoint $\mathbf{d}$, $d_1\neq d_2$ and sides $\mu_1 I_1 = \mu_1 I_2 =c_8Q^{-\gamma}$, where $\frac12< \gamma< 1$ and $c_8>c_0(n,\mathbf{d})$. Given positive values $v_1,v_2$ such that $v_1+v_2=n-1$, let $L=L_n(Q,\delta_n,\mathbf{v},\Pi)$ be the set of points $\mathbf{x}\in\Pi$ such that there exists a polynomial  $P\in\Pl_{n}(Q)$ satisfying the following system of inequalities:
\begin{equation}\label{eq4_1}
\begin{cases}
|P(x_i)|< h_n\cdot Q^{-v_i},\\
\min\limits_i\{|P'(x_i)|\}<\delta_n\cdot Q,\quad i=1,2,\\
\end{cases}
\end{equation}
where $h_n = \sqrt{\frac32(|d_1|+|d_2|)\cdot \max\left(1,3|d_1|,3|d_2|\right)^{n^2}}$. If $\Pi$ is a {\it $\left(\frac{v_1}{n-1},\frac{v_2}{n-1}\right)$-special} square, then
\[
\mu_2 L<\textstyle\frac14 \cdot\mu_2\Pi
\]
for $\delta_n<\delta_0(n,\mathbf{d})$ and $Q>Q_0(n,\mathbf{v},\mathbf{d},\gamma)$.
\end{lemma}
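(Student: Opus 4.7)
I would reduce the degree-$n$ problem to a problem about quadratic polynomials by pigeonholing on the top $n-2$ coefficients, so that the difference of any two witnesses sharing those coefficients is an integer polynomial of degree $\leq 2$ with height $\leq 2Q$. The $(v_1/(n-1),v_2/(n-1))$-special hypothesis then provides level-wise upper bounds on the number of such quadratic differences, which combined with a per-quadratic measure estimate and a geometric summation in the level index $l$ yields $\mu_2 L<\tfrac14\mu_2\Pi$.

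In detail: for each tuple $\mathbf{A}=(a_n,\ldots,a_3)\in\Z^{n-2}$ let $\Pl_n(Q,\mathbf{A})\subset\Pl_n(Q)$ be the subclass with these fixed top coefficients (there are $\leq (2Q+1)^{n-2}$ tuples). Covering $L$ by sub-squares of $\Pi$ of side length $\asymp Q^{-1}$ and fixing a reference witness $P_0\in\Pl_n(Q,\mathbf{A})$ in each, any other witness $P$ in the same sub-square and subclass yields a quadratic $R=P-P_0$ whose inequalities inherited from (\ref{eq4_1}) read, after using $v_i/(n-1)\leq v_i$ for $n\geq 2$,
\[
|R(x_i)|<h\cdot Q^{-v_i/(n-1)},\qquad v_1/(n-1)+v_2/(n-1)=1,
\]
matching precisely the form occurring in the $(l,v_1/(n-1),v_2/(n-1))$-condition. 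The case of singleton subclasses, where the difference argument is vacuous, is handled by a direct single-polynomial measure bound using the second and third inequalities of Lemma \ref{lm1} together with the count $(2Q+1)^{n-2}$ of top-tuples.

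One then partitions the quadratic differences $R$ by the level $l$ defined via $\delta Q^{\lambda_{l+1}}\leq|a_2|<\delta Q^{\lambda_l}$. For each $1\leq l\leq L+2$ the special hypothesis gives at most $\delta^3\cdot 2^{l+3}Q^{1+2\lambda_{l+1}}\mu_2\Pi$ admissible quadratics, whereas an explicit quadratic-formula argument (together with the refined distance bound from Lemma \ref{lm1}, into which the derivative condition $\min_i|P'(x_i)|<\delta_n Q$ feeds) bounds the $\mu_2$-measure of points served by one such $R$ by $\ll h^2 Q^{-1}|a_2|^{-2}$. Multiplying these two counts produces a contribution of $\ll\delta\cdot 2^l\mu_2\Pi$ per level; summing over $1\leq l\leq L+2$ and adding a direct tail estimate via Lemma \ref{lm6} (which crucially exploits $d_1\neq d_2$ to bound integer pairs $(a_1,a_0)$) for the nearly-linear regime $|a_2|<\delta Q^{\lambda_{L+2}}$ gives the required bound once $\delta_n$ is chosen sufficiently small. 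The main obstacle will be the delicate bookkeeping of constants so that the transferred inequalities on $R$ genuinely fit inside the $h$ and $\delta$ of the special definition uniformly in $l$, together with the separate treatment of reducible witnesses $P$ via Lemma \ref{lm4}.
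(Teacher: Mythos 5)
Your proposal would not close; the gaps are structural, not a matter of bookkeeping. The most serious is that you have no uniform bound on the measure of the set $\sigma_P$ on which a single witness $P$ is small: by Lemma \ref{lm1}, $\mu_2\sigma_P$ is controlled by $Q^{-v_1-v_2}/\left(|P'(\alpha_1)|\,|P'(\alpha_2)|\right)$, and the derivatives at the roots can be arbitrarily small. The derivative condition $\min_i|P'(x_i)|<\delta_n Q$ is an \emph{upper} bound, so it worsens this estimate rather than helping it, contrary to the way you invoke it. The paper's proof is forced to split the range of $|P'(x_i)|$ into three sub-intervals $T_{i,1},T_{i,2},T_{i,3}$ and run three unrelated arguments: Sprind\v{z}uk's root-clustering classification plus Lemma \ref{lm3} for small derivatives; a rectangle covering plus Lemma \ref{lm3} for intermediate ones; and the essential/non-essential dichotomy with an induction on degree for large ones. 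The degree is ever reduced only in this third regime and only by one unit at a time (fixing just the leading coefficient, not the top $n-2$), reaching the $\left(v_1,v_2\right)$-special hypothesis solely at the degree-$2$ base case.

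Second, fixing the top $n-2$ coefficients and passing to $R=P-P_0$ does not bound $\mu_2 L$: the set where $P$ is small is not contained in the set where $R$ is small, only the \emph{intersection} $\sigma_P\cap\sigma_{P_0}$ is, so ``the measure of points served by $R$'' does not majorize the measure served by $P$. The special hypothesis bounds the \emph{number} of quadratics, not the measure of any degree-$n$ union, and the per-quadratic measure $\ll Q^{-1}|a_2|^{-2}$ appearing in the degree-$2$ base case has $a_2$ equal to the leading coefficient of the witness itself, not of a difference; the roots and derivatives of $R$ are unrelated to those of $P$. The paper's essential/non-essential mechanism is exactly what makes the difference idea productive: essential sets already have small total measure by near-disjointness, while non-essential ones overlap by more than half, and it is on that overlap (together with Lemma \ref{lm2}) -- not on an a priori sub-square -- that $R$ is small, allowing a one-step degree reduction. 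Your single jump from degree $n$ to degree $2$ discards this control, and even within it the linear case $a_2(R)=0$ and the reducible witnesses (Lemma \ref{lm4}, a quite separate covering argument in the paper) would need to be developed rather than flagged.
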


\begin{proof}
Since $d_1\neq d_2$ we can assume that for $Q>Q_0$ the following inequality
\begin{equation}\label{eq4_01}
|x_1-x_2|>\varepsilon_1=\textstyle\frac{|d_1-d_2|}{2}
\end{equation}
is satisfied for every point $\mathbf{x}\in\Pi$.

Let us introduce some additional notation. For a polynomial $P$, let $\mathcal{A}(P)$ denote the set of roots of $P$. Denote by $L_1$ and $L_2$ the sets of points $\mathbf{x}\in\Pi$ such that there exists an irreducible polynomial $P\in\Pl_{n}(Q)$ satisfying (\ref{eq4_1}) with a condition $|P'(x_1)|<\delta_nQ$ or $|P'(x_2)|<\delta_nQ$ respectively, and let $L_3$ denote the set of points $\mathbf{x}\in\Pi$ such that (\ref{eq4_1}) is satisfied for some reducible polynomial $P\in\Pl_{n}(Q)$. Clearly, we have $L= L_1\cup L_2\cup L_3$. 

The case of irreducible polynomials will be the most difficult one and requires the largest part of the proof. Let us start by considering this case, deriving estimates for the measures $\mu_1 L_1$ and $\mu_1 L_2$. Without loss of generality, let us assume that $|P'(x_1)|<\delta_nQ$, i.e., consider the set $L_1$.

In this case the main idea is to split an interval $T_i$, which contains all possible values of $P'$ at points $\mathbf{x}\in\Pi$, into sub-intervals $T_{i,1}$, $T_{i,2}$, $T_{i,3}$ and to estimate the measure of the set of solutions of the system (\ref{eq4_1}) for $|P'(x_i)|\in T_{i,k}$, $k=1,2,3$. This splitting is performed as follows:
\[
T_{i,1}=\left[0;\quad 2c_{15}\cdot Q^{\frac12-\frac{v_i}{2}}\right),\quad T_{i,2}=\left[2c_{15}\cdot Q^{\frac12-\frac{v_i}{2}};\quad Q^{\frac{1}{2}-\frac{(n-2)v_i}{2(n-1)}\cdot\theta(n)}\right),\quad i=1,2;
\]
\[
T_{1,3}=\left[Q^{\frac{1}{2}-\frac{(n-2)v_1}{2(n-1)}\cdot\theta(n)};\quad \delta_n\cdot Q\right),\qquad\qquad T_{2,3}=\left[Q^{\frac{1}{2}-\frac{(n-2)v_2}{2(n-1)}\cdot\theta(n)};\quad \rho_{n+1}(d_2)\cdot Q\right),
\]
where $\theta(n)= 0$ if $n\leq 3$ and $\theta(n)= 1$ if $n>3$.

Without loss of generality, let us assume that $|d_1|<|d_2|$. We would like to verify that if a polynomial $P\in\Pl_{n}(Q)$ satisfies the condition 
\begin{equation}\label{eq4_2}
|P'(x_i)|\ge 2c_{15}\cdot Q^{\frac12-\frac{v_i}{2}},
\end{equation}
then the values $|P'(\alpha_i)|$ can be estimated as follows:
\begin{equation}\label{eq4_02}
\textstyle\frac12|P'(x_i)| \leq |P'(\alpha_i)|\leq 2|P'(x_i)|,\quad i=1,2,
\end{equation}
where $x_i\in S(\alpha_i)$ and $c_{15}=2^{n-1}n(n-1)\cdot \max\{h_n,1\}\cdot\max\{1,\rho_{n-1}(d_2)\}$. Let us write a Taylor expansion of $P'$:
\begin{equation}\label{eq4_3}
P'(x_i)=P'(\alpha_i)+P''(\alpha_i)(x_i-\alpha_i)+\ldots+\textstyle\frac{1}{(n-1)!}\cdot P^{(n)}(\alpha_i)(x_i-\alpha_i)^{n-1}.
\end{equation}
Using Lemma \ref{lm1} and estimates (\ref{eq4_1}), (\ref{eq4_2}), we obtain
\[
|x_i-\alpha_i|\leq nh_nc_{15}^{-1}\cdot Q^{-\frac{v_i+1}{2}}<Q^{-\frac{v_i+1}{2}},\qquad  |\alpha_i|\leq |x_i|+\textstyle\frac12<|d_2|+1
\]
for $Q>Q_0$. Let us estimate every term in (\ref{eq4_3}) in the following way:
\[
\left|\textstyle\frac{1}{(k-1)!}\cdot P^{(k)}(\alpha_i)(x_i-\alpha_i)^{k-1}\right|< \textstyle{k-1 \choose n-1}\cdot n(n-1)\rho_{n-1}(d_2)\cdot Q^{\frac12-\frac{v_i}{2}},
\]
for $Q>Q_0$ and $2\leq k\leq n$. Thus, we can write 
\[
\left|\sum_{k=2}^n \textstyle\frac{1}{(k-1)!}\cdot P^{(k)}(\alpha_i)(x_i-\alpha_i)^{k-1}\right|<2^{n-1}n(n-1)\rho_{n-1}(d_2)\cdot Q^{\frac12-\frac{v_i}{2}}<\textstyle\frac12|P'(x_i)|.
\]
Substituting this inequality into (\ref{eq4_3}) yields (\ref{eq4_02}).

This means that for $|P'(x_i)|\in T_{i,3}$ and $|P'(x_i)|\in T_{i,2}$ we have $|P'(\alpha_i)|\in \overline{T}_{i,3}$ and $|P'(\alpha_i)|\in \overline{T}_{i,2}$ respectively, where
\[
\overline{T}_{1,3} =\left[\textstyle\frac12 Q^{\frac{1}{2}-\frac{(n-2)v_1}{2(n-1)}\cdot\theta(n)};\quad 2\delta_n\cdot Q\right), \qquad\qquad \overline{T}_{2,3} =\left[\textstyle\frac12 Q^{\frac{1}{2}-\frac{(n-2)v_2}{2(n-1)}\cdot\theta(n)};\quad 2\rho_{n+1}(d_2)\cdot Q\right),
\]
\[
\overline{T}_{i,2}=\left[c_{15}\cdot Q^{\frac12-\frac{v_i}{2}};\quad 2\cdot Q^{\frac{1}{2}-\frac{(n-2)v_i}{2(n-1)}\cdot\theta(n)}\right),\quad i=1,2.
\]

Let us consider the case $|P'(\alpha_i)|\in \overline{T}_{i,3}$, $i=1,2$. We are going to use induction on the degree of polynomials $P$. 

\subsubsection*{The base of induction: polynomials of second degree.}

Let us consider the system (\ref{eq4_1}) for $n=2$. For a given $u_{2,1},u_{2,2}>0$ under condition $u_{2,1}+u_{2,2}=1$ let $L'=L_2(Q,\delta_2,\mathbf{u}_2,\Pi)$ be the set of points $\mathbf{x}\in\Pi$ such that there exists a polynomial $P\in\Pl_{2}(Q)$ satisfying the system of inequalities
\begin{equation}\label{eq4_4}
\begin{cases}
|P(x_i)|< h_2\cdot Q^{-u_{2,i}},\\
\min\limits_i\{|P'(x_i)|\}<\delta_2\cdot Q,\quad i=1,2.
\end{cases}
\end{equation}
Let us prove that for all {\it $\left(u_{2,1},u_{2,2}\right)$-special} squares $\Pi$ satisfying the conditions of Lemma \ref{lm7}, the estimate
\[
\mu_2 L'<\textstyle\frac{1}{4}\cdot\mu_2\Pi
\]
holds for $\delta_2<\delta_0(\mathbf{d},\gamma)$ and $Q>Q_0(\mathbf{u}_2,\gamma,\mathbf{d})$.

Let $P(t)=a_2t^2+a_1t+a_0$. First, note that the definition of a {\it $(u_{2,1},u_{2,2})$-special} square implies that for $Q>Q_0$ there exists not more than 
\[
\delta 2^{l+3}c_{5}^2Q^{1-2\gamma}<\delta 2^{l+3}c_{5}^2Q^{-\varepsilon}<1 
\]
polynomials $P\in\Pl_2(Q)$ satisfying $|a_2|< \delta Q^{\gamma-\frac12}$ and (\ref{eq4_4}). Therefore, from now on we are going to assume that $|a_2|\ge \delta Q^{\gamma-\frac12}$. 

By the third inequality of Lemma \ref{lm1}, for every polynomial $P$ satisfying the system (\ref{eq4_4}) at a point $\mathbf{x}\in\Pi$ we have the following estimates:
\begin{equation}\label{eq4_5}
|x_i-\alpha_i|<\left(|P(x_i)||a_2|^{-1}\right)^{\frac12} < \delta^{1/2}h_2^{1/2}\cdot Q^{-\frac{2\gamma+2u_{2,i}-1}{4}} < \textstyle\frac{\varepsilon_1}{8},\\
\end{equation}
where $Q>Q_0$ and $x_i\in S(\alpha_i)$, $i=1,2$. Thus, from (\ref{eq4_5}) and (\ref{eq4_01}) we obtain that the distance between the roots $\alpha_1$ and $\alpha_2$ of the polynomial $P$ satisfies
\[
|\alpha_1-\alpha_2|>|x_1-x_2|-|x_1-\alpha_1|-|x_2-\alpha_2|>\textstyle\frac34\cdot\varepsilon_1,
\]
This leads to the following lower bound for $|P'(\alpha_i)|$:
\begin{equation}\label{eq4_6}
|P'(\alpha_i)|=|a_2|\cdot |\alpha_1-\alpha_2|>\textstyle\frac{3}{4}\cdot \varepsilon_1\cdot |a_2|.
\end{equation}
An upper bound for $|P'(\alpha_i)|$ can be obtained from the Taylor expansion of polynomial $P'$:
\[
|P'(\alpha_i)|\leq|P'(x_i)|+|P''(x_i)|\cdot|x_i-\alpha_i|\leq |P'(x_i)|+\textstyle\frac{\varepsilon_1}{4}\cdot|a_2|.
\]
Hence, by (\ref{eq4_6}) and (\ref{eq4_4}) we have
\begin{equation}\label{eq4_9}
|a_2|< 4\varepsilon_1^{-1}\cdot\min\limits_i\{|P'(x_i)|\}<4\delta_2\varepsilon_1^{-1}\cdot Q.
\end{equation}

Now let us turn to the estimation of $\mu_2 L'$. From Lemma \ref{lm1} and the estimates (\ref{eq4_6}) it follows that $L'\subset\bigcup\limits_{P\in\Pl_2(Q)}{\sigma_P}$, where
\begin{equation}\label{eq4_9_1}
\sigma_P=\{\mathbf{x}\in\Pi:\quad |x_i-\alpha_{i}|<2h_2\varepsilon_1^{-1}Q^{-u_{2,i}}|a_2|^{-1},i=1,2\}.
\end{equation}
Simple calculations show that for $c_8 > 2^4h_2\varepsilon_1^{-1}\delta^{-1}$ and $|a_2|>\delta Q^{\gamma-\frac12}$ we have:
\[
\mu_2\sigma_P\leq 2^4h_2^2\varepsilon_1^{-2}Q^{-1}|a_2|^{-2}\leq \textstyle\frac{2^8h_2^2}{\varepsilon_1^2\delta^{2}}\cdot Q^{-2\gamma}<\textstyle\frac14\cdot \mu_2\Pi.
\]

Let $\Pl_{2}(Q,l)\subset\Pl_2(Q)$ be a subclass of polynomials defined as follows: 
\[
\Pl_{2}(Q,l)=\{P\in\Pl_2(Q):\delta Q^{\lambda_{l+1}}\leq |a_2|<\delta Q^{\lambda_l}\},
\]
where $\lambda_l$ is defined by (\ref{eq2_90}) and $\delta = 2^{-L-17}h_2^{-2}\cdot (d_1-d_2)^2$, $L=\left[\textstyle\frac{3-2\gamma}{1-\gamma}\right]$. Thus, by (\ref{eq4_9}) it follows, that for $|a_2|>\delta Q^{\gamma-\frac12}$ and $\delta_2=\frac{4\delta}{\varepsilon_1}$ we have:
\[
\mu_2 L'\leq\mu_2\bigcup\limits_{P\in\Pl_2(Q)}{\sigma_P}\leq
\sum\limits_{l=1}^{L+1}\sum\limits_{P\in\Pl_{2}(Q,l)}{\mu_2\sigma_P}.
\]
From the definition of a {\it $(u_{2,1},u_{2,2})$-special} square it follows, that the number of polynomials $P\in\Pl_{2}(Q,l)$ satisfying (\ref{eq4_4}) does not exceed
\begin{equation}\label{eq4_9_2}
\delta^3\cdot 2^{l+3}Q^{1+2\lambda_{l+1}}\mu_2\Pi.
\end{equation}
Hence, by estimates (\ref{eq4_9_1}) and (\ref{eq4_9_2}) we have:
\[
\mu_2 L_{2}\leq 2^8\varepsilon_1^{-2}h_2^2\delta Q^{-1}\mu_2\Pi\cdot\sum\limits_{l=1}^{L+1}{2^{l+3}Q^{1+2\lambda_{l+1}-2\lambda_{l+1}}}\leq \textstyle\frac{1}{4}\cdot\mu_2\Pi.
\]

\subsubsection*{The induction step: reduction of the degree of the polynomial.}

Let us return to the proof of Lemma \ref{lm7}. For $|P'(\alpha_i)|\in \overline{T}_{i,3}$, $i=1,2$ we consider the following system of inequalities:
\begin{equation}\label{eq4_10}
\begin{cases}
|P(x_i)|< h_n\cdot Q^{-v_i},\quad i=1,2,\\
\frac12 Q^{\frac{1}{2}-\frac{(n-2)v_1}{2(n-1)}\cdot\theta(n)}\leq|P'(\alpha_1)|<2\delta_n\cdot Q,\\
\frac12 Q^{\frac{1}{2}-\frac{(n-2)v_2}{2(n-1)}\cdot\theta(n)}\leq|P'(\alpha_2)|<2\rho_{n+1}(d_2)\cdot Q.
\end{cases}
\end{equation}

Denote by $L_{3,3}$ a set of points $\mathbf{x}\in\Pi$ such that there exists a polynomial $P\in\Pl_{n}(Q)$ satisfying the system (\ref{eq4_10}). By Lemma \ref{lm1}, it follows that $L_{3,3}\subset\bigcup\limits_{P\in\Pl_{n}(Q)}\bigcup\limits_{\boldsymbol{\alpha}\in \mathcal{A}^2(P)}{\sigma_P(\boldsymbol{\alpha})}$, where
\begin{equation}\label{eq4_11}
\sigma_P(\boldsymbol{\alpha}):=\left\{\mathbf{x}\in\Pi:\quad |x_i-\alpha_i|<2^{n-1}h_nQ^{-v_i}|P'(\alpha_i)|^{-1}, i=1,2\right\}.
\end{equation}
It means that the following estimate for $\mu_2 L_{3,3}$ holds:
\[
\mu_2 L_{3,3}\leq\mu_2\bigcup\limits_{P\in\Pl_{n}(Q)}\bigcup\limits_{\boldsymbol{\alpha}\in \mathcal{A}^2(P)}{\sigma_P(\boldsymbol{\alpha})}\leq\sum\limits_{P\in\Pl_{n}(Q)}\sum\limits_{\boldsymbol{\alpha}\in \mathcal{A}^2(P)}{\mu_2\sigma_P(\boldsymbol{\alpha})}.
\]
Together with the sets $\sigma_P(\boldsymbol{\alpha})$ consider the following expanded sets
\begin{equation}\label{eq4_12}
\sigma'_P(\boldsymbol{\alpha})=\sigma'_{P,1}(\alpha_1)\times\sigma'_{P,2}(\alpha_2)=\left\{\mathbf{x}\in\Pi:\quad |x_i-\alpha_i|<c_{16}Q^{-u_{i,n-1}}|P'(\alpha_i)|^{-1}\right\},
\end{equation}
where $u_{i,n-1}=\frac{(n-2)v_i}{n-1}$, $i=1,2$. It is easy to see that the measure of an expanded set $\sigma'_P(\boldsymbol{\alpha})$ is smaller than the measure of the square $\Pi$ for $Q>Q_0$.

Using (\ref{eq4_11}) and (\ref{eq4_12}), we find that the measures of the sets $\sigma_P(\boldsymbol{\alpha})$ and $\sigma'_P(\boldsymbol{\alpha})$ are connected as follows:
\begin{equation}\label{eq4_13}
\mu_2\sigma_P(\boldsymbol{\alpha})\leq 2^{2n-2}h_n^2c_{16}^{-2}\cdot Q^{-1}\mu_2\sigma'_P(\boldsymbol{\alpha}).
\end{equation}

For a fixed $a$, let  $\Pl_{n}(Q,a)\subset \Pl_n(Q)$ denote a subclass of polynomials with the leading coefficient $a$: 
\[
\Pl_{n}(Q,a)=\left\{P\in \Pl_{n}(Q): P(t)=at^n+\ldots+a_0\right\}.
\]
Since $-Q\leq a\leq Q$, the number of subclasses $\Pl_{n}(Q,a)$ is equal to
\begin{equation}\label{eq4_14}
\#\left\{a\right\}= 2Q + 1.
\end{equation}

We are going to use Sprind\v{z}uk's method of essential and non-essential domains \cite{Spr67}. Consider a family of sets $\sigma'_P(\boldsymbol{\alpha})$, $P\in\Pl_{n}(Q,a)$. A set $\sigma'_{P_1}(\boldsymbol{\alpha}_1)$ is called {\it essential} if for every set $\sigma'_{P_2}(\boldsymbol{\alpha}_2)$, $P_2\neq P_1$, the inequality
\[
\mu_2\left(\sigma'_{P_1}(\boldsymbol{\alpha}_1)\cap\sigma'_{P_2}(\boldsymbol{\alpha}_2)\right) < \textstyle\frac12\cdot \mu_2\sigma'_{P_1}(\boldsymbol{\alpha}_1).
\]
is satisfied. Otherwise, the set $\sigma'_{P_1}(\boldsymbol{\alpha}_1)$ is called {\it non-essential}. 

{\it The case of essential sets.} From the definition of essential sets, we immediately have that
\begin{equation}\label{eq4_15}
\sum\limits_{P\in\Pl_{n}(Q,a)}\sum\limits_{\substack{\boldsymbol{\alpha}\in \mathcal{A}^2(P):\\ \sigma'_P(\boldsymbol{\alpha}) \text{---essential}}}{\mu_2\sigma'_P(\boldsymbol{\alpha})}\leq 2\mu_2\Pi.
\end{equation}
Then inequalities (\ref{eq4_13}), (\ref{eq4_14}) and (\ref{eq4_15}) for $c_{16}=2^{n+5}h_n$ allow us to write
\begin{equation}\label{eq4_16}
\sum\limits_{a}\sum\limits_{P\in\Pl_{n}(Q,a)}\sum\limits_{\substack{\boldsymbol{\alpha}\in \mathcal{A}^2(P):\\ \sigma'_P(\boldsymbol{\alpha}) \text{---ess.}}}{\mu_2\sigma_P(\boldsymbol{\alpha})}\leq 2^{-10}\cdot \sum\limits_{P\in\Pl_{n}(Q,a)}\sum\limits_{\substack{\boldsymbol{\alpha}\in \mathcal{A}^2(P):\\ \sigma'_P(\boldsymbol{\alpha}) \text{---ess.}}}{\mu_2\sigma'_P(\boldsymbol{\alpha})} < \textstyle\frac{1}{288}\cdot\mu_2\Pi.
\end{equation}

{\it The case of non-essential sets.} If a set $\sigma'_{P_1}(\boldsymbol{\alpha}_1)$ is {\it non-essential}, then the family contains another set  $\sigma'_{P_2}(\boldsymbol{\alpha}_2)$ such that $\mu_2\left(\sigma'_{P_1}(\boldsymbol{\alpha}_1)\cap\sigma'_{P_2}(\boldsymbol{\alpha}_2)\right) > \textstyle\frac12 \mu_2\sigma'_{P_1}(\boldsymbol{\alpha}_1)$. Consider the difference $R= P_2-P_1$, which is a polynomial of degree $\deg R\leq n-1$ and height $H(R)\leq 2Q$. Let us estimate the polynomials $R$ and $R'$ at points $\mathbf{x}\in\left(\sigma'_{P_1}(\boldsymbol{\alpha}_1)\cap\sigma'_{P_2}(\boldsymbol{\alpha}_2)\right)$.

From the Taylor expansions of the polynomials $P_j$, in the intervals $\sigma'_{P_1,i}(\alpha_{1,i})\cap\sigma'_{P_2,i}(\alpha_{2,i})$, $i,j=1,2$, the estimates (\ref{eq4_10}), (\ref{eq4_12}) and the equality $u_{i,n-1}=\frac{(n-2)v_i}{n-1}$ we have:
\[
|P_j(x_i)|\leq \sum\limits_{k=1}^n{\left|\textstyle\frac{1}{k!}P_j^{(k)}(\alpha_{j,i})(x_i-\alpha_{j,i})^k\right|}\leq \sum\limits_{k=1}^n{\textstyle{k \choose n}\cdot \rho_{n}c_{16}^k\cdot Q^{-u_{i,n-1}}}\leq \rho_n(d_2)(1+c_{16})^n\cdot Q^{-u_{i,n-1}}
\]
for $Q>Q_0$. Now we can write:
\begin{equation}\label{eq4_18_1}
|R(x_i)|<|P_1(x_i)|+|P_2(x_i)|<2\rho_n(d_2)(1+c_{16})^n\cdot Q^{-u_{i,n-1}}. 
\end{equation}

Similarly, Taylor expansions of the polynomials $P'_j$, $j=1,2$ in the intervals $\sigma'_{P_1,i}(\alpha_{1,i})\cap\sigma'_{P_2,i}(\alpha_{2,i})$, the estimates (\ref{eq4_10}), (\ref{eq4_12}) and the equality $u_{i,n-1}=\frac{(n-2)v_i}{n-1}$ allow us to write
\[
|P_j(x_i)|< n^2\rho_{n}(d_2)(1+c_{16})^{n-1}\cdot|P'_j(\alpha_{j,i})|.
\]
From these estimates and the inequalities (\ref{eq4_10}) it easily follows that
\begin{equation}\label{eq4_18_2}
\min\limits_i\left\{|R'(x_i)|\right\}<4n^2\rho_{n}(d_2)(1+c_{16})^{n-1}\delta_n\cdot Q.
\end{equation}

The inequalities (\ref{eq4_18_1}) and (\ref{eq4_18_2}) are satisfied for every point $\mathbf{x}\in \sigma'_{P_1}(\boldsymbol{\alpha}_1)\cap\sigma'_{P_2}(\boldsymbol{\alpha}_2)$. Since $\mu_1\left(\sigma'_{P_1,i}(\alpha_{1,i})\cap\sigma'_{P_2,i}(\alpha_{2,i})\right) > \frac12 \mu_1 \sigma'_{P_1,i}(\alpha_{1,i})$ for $i=1,2$, from Lemma \ref{lm2} it follows that for every point $\mathbf{x}\in \sigma'_{P_1}(\boldsymbol{\alpha}_1)$ the inequalities 

\begin{equation}\label{eq4_18}
|R(x_i)|<c_{17}\cdot Q^{-u_{i,n-1}},\quad \min\limits_i\left\{|R'(x_i)|\right\}<c_{18}\delta_n\cdot Q,
\end{equation}
hold, where $c_{17}=6^n(n+1)^{n+1}\cdot 2\rho_n(d_2)(1+c_{16})^n$ and $c_{18}=6^n(n+1)^{n+1}\cdot 2n^2\rho_{n}(d_2)(1+c_{16})^{n-1}$.

Denote by $L'$ a set of points $\mathbf{x}\in\Pi$ such that there exists a polynomial $R\in\Pl_{n-1}(Q_1)$ satisfying the following system of inequalities:
\[
\begin{cases}
|R(x_i)|< c_{19}h_{n-1}\cdot Q_1^{-u_{i,n-1}},\quad u_{i,n-1} > 0,\\
\min\limits_i\{|R'(x_i)|\}<\delta_{n-1}\cdot Q_1,\\
u_{1,n-1}+u_{2,n-1}=n-2,\quad i=1,2,
\end{cases}
\]
where $Q_1= 2Q$, $c_{19}=\max\limits_i\left\{2^{u_i,n-1}\right\}c_{17}h_{n-1}^{-1}$ and $\delta_{n-1}=2c_{18}\cdot \delta_n$.

It should be mentioned that if polynomial $R(t)=a_1t-a_0$ is linear, then by Lemma \ref{lm1} we obtain:
\[
\left|x_i-\textstyle\frac{a_0}{a_1}\right|\ll Q_1^{-\gamma_{j_2,i}}<\textstyle\frac{\varepsilon}{4},\quad i=1,2
\]
for $Q_1>Q_0$. Hence, we immediately have $|x_1-x_2|<\varepsilon$ which contradicts to condition 2 for polynomial $\Pi$.

The estimates (\ref{eq4_18}) imply that the inclusion 
\[
\bigcup\limits_{P\in\Pl_{n}(Q,a)}\bigcup\limits_{\substack{\boldsymbol{\alpha}\in \mathcal{A}^2(P):\\ \sigma'_P(\boldsymbol{\alpha}) \text{---non-ess.}}}{\sigma'_P(\boldsymbol{\alpha})}\subset L'
\]
is satisfied for all $a$. Thus, by the induction assumption, we obtain that 
\begin{equation}\label{eq4_20}
\sum_{a}\sum\limits_{P\in\Pl_{n}(Q,a)}\sum\limits_{\substack{\boldsymbol{\alpha}\in \mathcal{A}^2(P):\\ \sigma'_P(\boldsymbol{\alpha}) \text{---non-ess.}}}{\mu_2\sigma_P(\boldsymbol{\alpha})}\leq \mu_2 L'\leq \textstyle\frac{1}{288}\cdot\mu_2\Pi,
\end{equation}
for a sufficiently small constant $\delta_n$ and $Q>Q_0$. Then the estimates (\ref{eq4_16}) and (\ref{eq4_20}) allow us to write
\[
\mu_2 L_{3,3}\leq \sum_{a}\sum\limits_{P\in\Pl_{n}(Q,a)}\sum\limits_{\substack{\boldsymbol{\alpha}\in \mathcal{A}^2(P):\\ \sigma'_P(\boldsymbol{\alpha}) \text{---ess.}}}{\mu_2\sigma_P(\boldsymbol{\alpha})}+\sum_{a}\sum\limits_{P\in\Pl_{n}(Q,a)}\sum\limits_{\substack{\boldsymbol{\alpha}\in \mathcal{A}^2(P):\\ \sigma'_P(\boldsymbol{\alpha}) \text{---non-ess.}}}{\mu_2\sigma_P(\boldsymbol{\alpha})}\leq\textstyle\frac{1}{144}\cdot\mu_2\Pi.
\]

\subsubsection*{The case of sub-intervals $T_{1,n}$ and $T_{2,n}$}

For $|P'(\alpha_i)|\in \overline{T}_{i,2}$, $i=1,2$, we have the following system of inequalities:
\begin{equation}\label{eq4_21}
\begin{cases}
|P(x_i)|< h_n\cdot Q^{-v_i},\\
c_{15}\cdot Q^{\frac12-\frac{v_i}{2}} \leq|P'(\alpha_i)|<2Q^{\frac{1}{2}-\frac{(n-2)v_i}{2(n-1)}\cdot \theta(n)},\quad i=1,2.
\end{cases}
\end{equation}

Denote by $L_{2,2}$ a set of points $\mathbf{x}\in\Pi$ such that there exists a polynomial $P\in\Pl_{n}(Q)$ satisfying (\ref{eq4_21}). By Lemma \ref{lm1}, the set $L_{2,2}$ is contained in a union $\bigcup\limits_{P\in\Pl_{n}(Q)}\bigcup\limits_{\boldsymbol{\alpha}\in \mathcal{A}^2(P)}{\sigma_P(\boldsymbol{\alpha})}$, where
\begin{equation}\label{eq4_22}
\sigma_P(\boldsymbol{\alpha})=\left\{\mathbf{x}\in\Pi:|x_i-\alpha_i|<2^{n-1}h_nc_{15}^{-1}Q^{-\frac{v_i+1}{2}},\quad i=1,2\right\}.
\end{equation}

In this case we cannot use induction since the degree of the polynomial cannot be reduced. Let us estimate the measure of the set $L_{2,2}$ by a different method. Without loss of generality, we can assume that $v_1\leq v_2$.

Let us cover the square $\Pi$ by a system of disjoint rectangles $\Pi_k=J_{k,1}\times J_{k,2}$, where $\mu_1 J_{k,i}=Q^{-\frac{v_i+1}{2}+\varepsilon_{2,i}}$, $i=1,2$. The number of rectangles $\Pi_k$ can be estimated as follows:
\begin{equation}\label{eq4_24}
k\leq 4\max\left\{\frac{\mu_1 I_1}{\mu_1 J_{k,1}}, 1\right\}\cdot\max\left\{\frac{\mu_1 I_2}{\mu_1 J_{k,2}}, 1\right\}=
\begin{cases}
4Q^{\frac{n+1}{2}-\varepsilon_{2,1}-\varepsilon_{2,2}}\mu_2\Pi,\quad \gamma<\frac{v_i+1}{2},\\
4Q^{\frac{v_2+1}{2}-\varepsilon_{2,2}}\mu_1 I_2,\quad \gamma\ge\frac{v_1+1}{2}. 
\end{cases}
\end{equation}

We are going to say that a polynomial $P$ belongs to $\Pi_k$ if there is a point $\mathbf{x}\in\Pi_k$ such that the inequalities (\ref{eq4_21}) are satisfied. Let us prove that a rectangle $\Pi_k$ cannot contain two irreducible polynomials $P\in\Pl_{n}(Q)$. Assume the converse: the system of inequalities (\ref{eq4_21}) holds for some irreducible polynomials $P_j$ at some point $\mathbf{x}_j\in\Pi_k$, $j=1,2$. It means that for $Q>Q_0$ and all points $\mathbf{x}\in\Pi_k$ the estimates
\begin{equation}\label{eq4_22}
|x_i-\alpha_{j,i}|\leq|x_i-x_{j,i}|+|x_{j,i}-\alpha_{j,i}|\leq 2\cdot Q^{-\frac{v_i+1}{2}+\varepsilon_{2,i}}<Q^{-\frac{v_i+1}{2}+2\varepsilon_{2,i}}
\end{equation}
are satisfied, where $x_{j,i}\in S(\alpha_{j,i})$.

Let us estimate the absolute values $|P_j(x_i)|$, $i,j=1,2$, where $\mathbf{x}\in\Pi_k$. From the Taylor expansions of $P_j$ in the interval $J_{k,i}$ and estimates (\ref{eq4_21}), (\ref{eq4_22}), we obtain that:
\[
|P_j(x_i)|\leq \rho_n(d_2)3^n\cdot Q^{-v_i+\frac{v_i}{2(n-1)}+(n-1)\varepsilon_{2,i}}<Q^{-v_i+\frac{v_i}{2(n-1)}+n\varepsilon_{2,i}},
\]
for $Q>Q_0$ and $\varepsilon_{2,i}<\textstyle\frac{v_i}{2(n-1)}$.

Applying Lemma \ref{lm3} for $\eta_i = \frac{v_i+1}{2}-2\varepsilon_{2,i}$, $\tau_i=v_i-\frac{v_i}{2(n-1)}-n\cdot \varepsilon_{2,i}$, $i=1,2$ and $\varepsilon_{2,i}=\textstyle\frac{v_i}{8(n-1)}$ leads to the inequality
\[
\tau_1+\tau_2+2+2(\tau_1+1-\eta_1)+2(\tau_2+1-\eta_2)>2n+\textstyle\frac{1}{4}.
\] 
This contradiction shows that there is at most one irreducible polynomial $P\in\Pl_{n}(Q)$ that belongs to a rectangle $\Pi_k$. Hence, by the inequalities (\ref{eq4_22}) and (\ref{eq4_24}) for $Q>Q_0$ we can estimate the measure of the set $L_{2,2}$ as follows:
\[
\mu_2 L_{2,2}\leq\sum\limits_{\Pi_k}{\mu_2\sigma_P(\boldsymbol{\alpha})}\ll Q^{-\varepsilon_{2,2}}\mu_2\Pi<\textstyle\frac{1}{144}\cdot\mu_2\Pi.
\]

\subsubsection*{The case of a small derivative}

Let us discuss a situation where $|P'(x_i)|\in T_{i,1}$, $i=1,2$. In this case we can show that if for some polynomial $P$ and a point $\mathbf{x}\in \Pi$ the inequalities (\ref{eq4_1}) are satisfied for $|P'(x_i)|\in T_{i,1}$, then by Lemma \ref{lm1} we have:
\[
\begin{split}
\left|P''(\alpha_{i})(x_i-\alpha_{i})+\ldots+\textstyle\frac{1}{(n-1)!}\cdot P^{(n)}(\alpha_{i})(x_i-\alpha_{i})^{n-1}\right|<c_{15} Q^{\frac12-\frac{v_1}{2}}.
\end{split}
\]
Using the Taylor expansion of the polynomial $P'$ and this estimate we obtain:
\[
|P'(\alpha_i)|<3c_{15}\cdot Q^{\frac12-\frac{v_i}{2}},
\]
which contradicts our assumption.

Denote by $L_{1,1}$ a set of points $\mathbf{x}\in\Pi$ such that there exists a polynomial $P\in\Pl_{n}(Q)$ satisfying
\begin{equation}\label{eq4_25}
\begin{cases}
|P(x_i)|< h_n\cdot Q^{-v_i},\\
|P'(\alpha_i)|<4c_{15}\cdot Q^{\frac12-\frac{v_i}{2}},\quad i=1,2.
\end{cases}
\end{equation}

The polynomials $P\in \mathcal{P}_{n}(Q)$ are going to be classified according to the distribution of their roots and the size of the leading coefficient. This classification was introduced by Sprin\v{z}uk in \cite{Spr67}.

Let $\varepsilon_3 > 0$ be a sufficiently small constant. For every polynomial $P\in \Pl_{n}(Q)$ of degree $3\leq m\leq n$ we define numbers $\rho_{1,j}$ and $\rho_{2,j}$, $2\le j\le m$, as solutions of the following equations
\[
|\alpha_{1,1}-\alpha_{1,j}| = Q^{-\rho_{1,j}},\qquad|\alpha_{2,1}-\alpha_{2,j}| = Q^{-\rho_{2,j}}.
\]
Let us also define vectors $\mathbf{k}_i=\left(k_{i,2},\ldots,k_{i,m}\right)\in\Z^{m-1}$ as solutions of the inequalities
\[
(k_{i,j}-1)\cdot \varepsilon_3 \le \rho_{i,j} < k_{i,j}\cdot\varepsilon_3 ,\quad i=1,2,\quad j=\overline{2,m}.
\]

Clearly, we have $m(m-1)$ pairs of vectors $\mathbf{k}_1, \mathbf{k}_2$ that correspond to a polynomial $P\in\Pl_{n}(Q)$ of degree $2\leq m\leq n$ depending on the choice of roots $\alpha_{1,1}$ and $\alpha_{1,2}$. Let us define subclasses of polynomials $\Pl_{m}(Q, \mathbf{k}_1, \mathbf{k}_2, u)\subset\Pl_{n}(Q)$ as follows. A polynomial $P$ of degree $2\leq m\leq n$ belongs to a subclass $\Pl_{m}(Q, \mathbf{k}_1, \mathbf{k}_2, u)$, if: 1. the pair of vectors $(\mathbf{k}_1, \mathbf{k}_2)$ correspond to the polynomial $P$ for some pair of roots $\alpha_1,\alpha_2$; 2. the leading coefficient of $P$ is bounded as follows: $Q^{u}\leq|a_m|<Q^{u+\varepsilon_3}$, where $u\in\Z\cdot\varepsilon_3$. 

Let us estimate the number of different subclasses $\Pl_{m}(Q, \mathbf{k}_1, \mathbf{k}_2, u)$. Since $1\leq |a_m|\leq Q$, the following estimate holds for $u$: $0\leq u\leq 1-\varepsilon_3$. On the other hand, we can write $Q\gg|\alpha_{j_1}-\alpha_{j_2}| \gg H(P)^{-m+1} \gg Q^{-m+1}$, where $\alpha_{j_1}, \alpha_{j_2}$ are the roots of a polynomial $P$, which leads to the estimate $-\frac{1}{\varepsilon_3}+1\le k_{i,j} \le \frac{m-1}{\varepsilon_3}$. Thus, an integer vector $\mathbf{k}_i = (k_{i,2},\ldots,k_{i,n})$ can assume at most $\left(m\varepsilon_3^{-1}-1\right)^{m-1}$ values. Now the number of subclasses $\Pl_{m}(Q, \mathbf{k}_1, \mathbf{k}_2, l)$ can be estimated as follows:
\begin{equation}\label{eq4_27}
\# \left\{m,\mathbf{k}_1, \mathbf{k}_2, u\right\}\leq nc_{20}^2\cdot(\varepsilon_3^{-1}+1),
\end{equation}
where $c_{20} = \sum\limits_{i=2}^{n}\left(i\varepsilon_3^{-1}-1\right)^{i-1}$.

Define the values $p_{i,j}$, $i=1,2$ as follows:
\begin{equation}\label{eq4_26}
\begin{cases}
p_{i,j} = (k_{i,j+1} + \ldots + k_{i,m})\cdot \varepsilon_3, \qquad 1\le j \le m-1,\\
p_{i,j}= 0,\qquad j=m.
\end{cases}
\end{equation}

Let us consider polynomials $P$ belonging to the same subclass $\Pl_{m}(Q, \mathbf{k}_1, \mathbf{k}_2, u)$. For these polynomials we can write the following estimates for their derivatives at a root $\alpha_i$:
\begin{equation}\label{eq4_28}
\begin{aligned}
Q^{u-p_{i,1}}\leq |P'(\alpha_i)|&=|a_m|\cdot|\alpha_{i,1}-\alpha_{i,2}|\ldots|\alpha_{i,1}-\alpha_{i,m}|\leq Q^{u-p_{i,1}+m\varepsilon_3},\\
|P^{(j)}(\alpha_i)|&\leq \textstyle\frac{m!}{(m-j)!}\cdot Q^{u-p_{i,j}+m \varepsilon_3}.
\end{aligned}
\end{equation}

Since we are concerned only with polynomials satisfying the system (\ref{eq4_25}), we can assume that the following inequalities hold for at least one value of $l$:
\[
Q^{u-p_{1,i}}\leq |P'(\alpha_i)|<4c_{15}Q^{\frac12-\frac{v_i}{2}},\quad i=1,2.
\]
This condition implies that
\begin{equation}\label{eq4_29}
p_{1,1}>u+\textstyle\frac{v_1-1}{2},\qquad p_{2,1}>u+\textstyle\frac{v_2-1}{2}.
\end{equation}

Now let us estimate the measure of the set $L_{1,1}$. From Lemma \ref{lm1} it follows that $L_{1,1}\subset\bigcup\limits_{m,\mathbf{k}_1, \mathbf{k}_2, u}\bigcup\limits_{P\in\Pl_{m}(Q, \mathbf{k}_1, \mathbf{k}_2, u)}\bigcup\limits_{\boldsymbol{\alpha}\in \mathcal{A}^2(P)}{\sigma_P(\boldsymbol{\alpha})}$, where
\[
\sigma_P(\boldsymbol{\alpha}):=\left\{\mathbf{x}\in\Pi:|x_i-\alpha_i| \le \min\limits_{2\le j\le m} \left(\textstyle\frac{2^{m-j}h_nQ^{-v_i}}{|P'(\alpha_{i,1})|}\cdot|\alpha_{i,1}-\alpha_{i,2}|\ldots|\alpha_{i,1}-\alpha_{i,j}| \right)^{1/j},i=1,2\right\}.
\]
This, together with earlier notation (\ref{eq4_26}) and the estimates (\ref{eq4_28}), yields
\[
\sigma_P(\boldsymbol{\alpha}):=\left\{\mathbf{x}\in\Pi:|x_i-\alpha_i| \le \textstyle\frac12\cdot \min\limits_{2\le j\le m} \left((2^mh_n)^{1/j}\cdot \textstyle Q^{\frac{-u-v_i+p_{i,j}}{j}}\right),i=1,2\right\}.
\]
for $P\in \Pl_{m}(Q, \mathbf{k}_1, \mathbf{k}_2, u)$.

The numbers $j=m_1$ and $j=m_2$ in the formula above provide the best estimates for the roots $\alpha_1$ and $\alpha_2$ respectively if the following inequalities are satisfied:
\begin{equation}\label{eq4_30}
(2^mh_n)^{1/m_i}\cdot Q^{\frac{-u-v_i+p_{i,m_i}}{m_i}}\leq (2^mh_n)^{1/k}\cdot Q^{\frac{-u-v_i+p_{i,k}}{k}},\quad 1\leq k\leq m, i=1,2,
\end{equation}
Then
\begin{equation}\label{eq4_31}
\sigma_P(\boldsymbol{\alpha}):=\left\{\mathbf{x}\in\Pi:|x_i-\alpha_i| < \textstyle\frac12\cdot (2^mh_n)^{1/m_i}\cdot Q^{\frac{-u-v_i+p_{i,m_i}}{m_i}},i=1,2\right\}.
\end{equation}

Let us cover the square $\Pi$ by a system of disjoint rectangles $\Pi_{m_1,m_2}=J_{m_1}\times J_{m_2}$, where $\mu_1 J_{m_i}=Q^{-\frac{u+v_i-p_{i,m_i}}{m_i}+\varepsilon_4}$. The number of rectangles $\Pi_{m_1,m_2}$ can be estimated as follows:
\begin{equation}\label{eq4_32}
\# \Pi_{m_1,m_2}\leq 4Q^{\frac{u+v_1-p_{1,m_1}}{m_1}+\frac{u+v_2-p_{2,m_2}}{m_2}-2\varepsilon_4}\cdot\mu_2\Pi.
\end{equation}

Let us show that a rectangle $\Pi_{m_1,m_2}$ cannot contain two irreducible polynomials belonging to the same subclass $\Pl_{m}(Q, \mathbf{k}_1, \mathbf{k}_2, u)$. Assume the converse: let the inequalities (\ref{eq4_25}) hold for some irreducible polynomial $P_j\in \Pl_{m}(Q, \mathbf{k}_1, \mathbf{k}_2, u)$ and some point $\mathbf{x}_j\in\Pi_{m_1,m_2}$, $j=1,2$. Then for all points $\mathbf{x}\in\Pi_{m_1,m_2}$, we obtain:
\begin{equation}\label{eq4_33}
|x_i-\alpha_{j,i}|\leq|x_i-x_{j,i}|+|x_{j,i}-\alpha_{j,i}|\leq 2\cdot  Q^{-\frac{u+v_i-p_{i,m_i}}{m_i}+\varepsilon_4}<Q^{-\frac{u+v_i-p_{i,m_i}}{m_i}+2\varepsilon_4},
\end{equation}
where $x_{j,i}\in S(\alpha_{j,i})$ and $Q>Q_0$.

Let us estimate $|P_j(x_i)|$, $i,j=1,2$, where $\mathbf{x}\in\Pi_{m_1,m_2}$. From the Taylor expansions of the polynomials $P_j$ in the intervals $J_{m_i}$ and the inequalities (\ref{eq4_28}), (\ref{eq4_33}), (\ref{eq4_30}) for $Q>Q_0$ we obtain that:
\[
|P_j(x_i)|\leq \rho_m(d_2)\cdot 3^m\cdot Q^{-v_1+m\varepsilon_4+m\varepsilon_3}<Q^{-v_1+(m+1)\varepsilon_4+m\varepsilon_3}.
\]

Let us apply Lemma \ref{lm3} for $\eta_i = \frac{u+v_i-p_{i,m_i}}{m_i}-2\varepsilon_4$ and $\tau_i=v_i-(m+1)\varepsilon_4-m\varepsilon_3$, $i=1,2$. Then for $\varepsilon_3=\frac{1}{12m}$ and $\varepsilon_4=\frac{1}{4(3m+1)}$ we have:
\[
\tau_1+\tau_2+2=(n-1)+2-2m\varepsilon_3-2m\varepsilon_4=n+1-\textstyle\frac16-2(m+1)\varepsilon_4,
\]
\[
2(\tau_i+1-\eta_i)= 2v_i+2-\textstyle\frac16-\textstyle\frac{2(u+v_i-p_{i,m_i})}{m_i}-2m\cdot\varepsilon_4.
\]
Let us estimate the expression $2(\tau_i+1-\eta_i)$ by using the inequalities (\ref{eq4_29}):
\[
2(\tau_i+1-\eta_i)\ge
\begin{cases}
v_i+2-u+\textstyle\frac{2p_{i,m_i}}{m}-\textstyle\frac16-2m\varepsilon_4,\quad m_i\ge 2,\\
v_i+1-\textstyle\frac16-2m\varepsilon_4,\quad m_i=1,
\end{cases}
\ge v_i+1-\textstyle\frac16-2m\cdot\varepsilon_4.
\]
Substituting these expressions into (\ref{eq2_2}) yields
\[
\tau_1+\tau_2+2+2(\tau_1+1-\eta_1)+2(\tau_2+1-\eta_2)>2m+\textstyle\frac12,
\]
which is a contradiction.

This means that there is at most one irreducible polynomial $P\in\Pl_{m}(Q, \mathbf{k}_1, \mathbf{k}_2, u)$ belonging to the rectangle $\Pi_{m_1,m_2}$. Now, by inequalities (\ref{eq4_27}) and (\ref{eq4_31}) for $Q>Q_0$, the measure of the set $L_{1,1}$ can be estimated as follows:
\[
\mu_2 L_{1,1}\leq\sum\limits_{m,\mathbf{k}_1, \mathbf{k}_2, u}{\sum\limits_{\Pi_{m_1,m_2}}{\mu_2\sigma_P}}\ll Q^{-2\varepsilon_4}\cdot \mu_2\Pi<\textstyle\frac{1}{72}\cdot\mu_2\Pi.
\]

\subsubsection*{Mixed cases}

All mixed cases have the same structure and can be proved using Lemma \ref{lm3} and the ideas described above \cite{BeGoGu16}.

Thus, we have $L_1\subset\bigcup\limits_{1\leq i,j\leq 3}{L_{i,j}}$, which leads to the following estimate: 
\[
\mu_2 L_1 \leq \sum\limits_{1\leq i,j\leq 3}{\mu_2 L_{i,j}}\leq 9\cdot\textstyle\frac{1}{144}\cdot\mu_2\Pi=\textstyle\frac{1}{16}\cdot\mu_2\Pi.
\]
Similarly, $\mu_2 L_2\leq \frac{1}{16}\cdot\mu_2\Pi$. These estimates conclude the proof of Lemma \ref{lm7} in the case of irreducible polynomials.

\subsubsection*{The case of reducible polynomials}

In this section we are going to estimate the measure of the set $L_3$. Clearly, the results of Lemma \ref{lm3} do not apply directly to this case. Let a polynomial $P$ of degree $n$ be a product of several (not necessarily different) irreducible polynomials $P_1, P_2, \ldots, P_s$, $s\ge 2$, where $\deg P_i = n_i\ge 1$ and $n_1+\ldots+n_s=n$. Then by Lemma \ref{lm4} we have:
\[
H(P_1)\cdot H(P_2)\cdot\ldots\cdot H(P_s) \leq c_{21} H(P) \leq c_{19} Q.
\]
On the other hand, by the definition of height, we have $H(P_i)\ge 1$, and thus $H(P_i)\leq c_{21} Q$, $i=1,\ldots,s$.

Denote by $L_{3}(k,\varepsilon_5)$ a set of points $\mathbf{x}\in\Pi$ such that there exists a polynomial $P\in\Pl_{k}(Q_1)$ satisfying the inequality:
\begin{equation}\label{eq4_49}
|R(x_1)R(x_2)|< h_n^2Q_1^{-k+\varepsilon_5}.
\end{equation}
If a polynomial $P$ satisfies the inequalities (\ref{eq4_1}) at a point $\mathbf{x}\in\Pi$, we can write
\[
|P(x_1)P(x_2)|=|P_1(x_1)P_1(x_2)|\cdot\ldots\cdot|P_s(x_1)P_s(x_2)|\leq h_n^2Q^{-n+1}.
\]
Since $n= n_1 +\ldots+ n_s$ and $s\ge 2$, it is easy to see that at least one of the inequalities
\begin{align}\label{eq4_41}
|P_i(x_1)P_i(x_2)| &\leq h_n^2Q^{-n_i+\gamma},\quad n_i\ge 2,\\
|P_i(x_1)P_i(x_2)| &\leq h_n^2Q^{-\gamma},\quad n_i= 1, i=1,\ldots, s, \notag
\end{align}
is satisfied at the point $\mathbf{x}$. Hence, $\mathbf{x}\in L_3\left(n_j,\gamma\right)$ for $n_j \ge 2$ or $\mathbf{x}\in L_3\left(1,1-\gamma\right)$ and we have 
\[
L_3\subset\left(\bigcup\limits_{k=2}^{n-1}L_{3}(k,\gamma)\right)\cup L_3(1,1-\gamma).
\]

Let us estimate the measure of the set $L_{3}(k,\gamma)$, $2\leq k\leq n-1$. Denote by $L_{3}^1(k,t)$ a set of points $(x_1,x_2)\in\Pi$ such that there exists a polynomial $P\in\Pl_{k}(Q_1)$ satisfying the inequalities:
\begin{equation}\label{eq4_40}
\begin{cases}
|P(x_1)|<h_{n}^2Q_1^t,\quad |P(x_2)|<h_{n}^2Q_1^{-k+1-t},\\
\min\limits_i\left\{|P'(\alpha_i)|\right\}<\delta_k Q_1,\quad x_i\in S(\alpha_i),i=1,2.
\end{cases}
\end{equation}
and by $L_{3}^2(k,t)$ a set of points $(x_1,x_2)\in\Pi$ such that there exists a polynomial $P\in\Pl_{k}(Q_1)$ satisfying the inequality:
\begin{equation}\label{eq4_41}
\begin{cases}
|P(x_1)|<h_{n}^2Q_1^t,\quad |P(x_2)|<h_{n}^2Q_1^{-k+\frac{1+\gamma}{2}-t},\\
|P'(\alpha_i)|>\delta_k Q_1,\quad x_i\in S(\alpha_i),\quad i=1,2.
\end{cases}
\end{equation}
By the definition of the set $L_3(k,\gamma)$ it is easy to see that:
\[
L_3(k,\gamma)\subset \left(\bigcup\limits_{i=0}^{N_1}{L_3^1(k, 1-i(1-\gamma))}\right)\cup\left(\bigcup\limits_{i=0}^{N_2}{L_3^2(k, 1-i(1-3\gamma)/2)}\right), 
\]
where $N_1=\left[\frac{2+k-\gamma}{1-\gamma}\right]$ and $N_2=\left[\frac{4+2k-2\gamma}{1-3\gamma}\right]$.

The system (\ref{eq4_40}) is a system of the form (\ref{eq4_1}). Furthermore, as the polynomials $P\in \Pl_{k}(Q_1)$ are irreducible and $k<n$, we can apply the above arguments for a sufficiently small constant $\delta_k$ and $Q_1 > Q_0$ to obtain the following estimate:
\begin{equation}\label{eq4_43}
\mu_2 L_3^1(k,t) < \textstyle\frac{1}{24n(N_1+1)}\cdot \mu_2\Pi.
\end{equation}

Now let us estimate the measure of the set $L_3^2(k,t)$. From Lemma \ref{lm1} it follows that $L_{3}^2(k,t)$ is contained in a union $\bigcup\limits_{P\in\Pl_{k}(Q)}\bigcup\limits_{\boldsymbol{\alpha}\in \mathcal{A}^2(P)}{\sigma_P(\boldsymbol{\alpha},t)}$, where
\[
\sigma_{P}(\boldsymbol{\alpha}, t) := 
\left\{
\mathbf{x}\in\Pi:
\begin{array}{ll}
|x_1-\alpha_1| \leq 2^{k-1}h_n^2\cdot Q^t\cdot|P'(\alpha_1)|^{-1},\\
|x_2-\alpha_2| \leq 2^{k-1}h_n^2\cdot Q^{-k+\frac{1+\gamma}{2}-t}\cdot|P'(\alpha_2)|^{-1}.
\end{array}
\right\}
\]
Let us estimate the value of the polynomial $P$ at a central point $\mathbf{d}$ of the square $\Pi$. A Taylor expansion of the polynomial $P$ can be written as follows:
\begin{equation}\label{eq4_44}
P(d_i)=P'(\alpha_i)(d_i-\alpha_i)+\textstyle\frac12 P''(\alpha_i)(d_i-\alpha_i)^2+\ldots +\textstyle\frac{1}{k!}\cdot P^{(k)}(\alpha_i)(d_i-\alpha_i)^k.
\end{equation}
If polynomial $P$ satisfy (\ref{eq4_41}) it follows that:
\begin{equation}\label{eq4_45}
\begin{split}
|d_1-\alpha_1|&\leq |d_1-x_{0,1}|+|x_{0,1}-\alpha_1|\leq \mu_1 I_1 + 2^{k-1}h_n^2\delta_k^{-1}\cdot Q_1^{t-1},\\
|d_2-\alpha_2|&\leq |d_2-x_{0,2}|+|x_{0,2}-\alpha_2|\leq \mu_1 I_2 + 2^{k-1}h_n^2\delta_k^{-1}\cdot Q_1^{-k+\frac{1+\gamma}{2}-t-1}.
\end{split}
\end{equation}
Without loss of generality, let us assume that $t\ge -k+\frac{1+\gamma}{2}-t$. Then we can rewrite the estimates (\ref{eq4_45}) as follows:
\[
|d_1-\alpha_1|\leq 
\begin{cases}
c_{21}\cdot\mu_1 I_1,\quad t < 1-\gamma,\\
c_{21}\cdot Q_1^{t-1},\quad 1-\gamma \leq t \leq 1,
\end{cases} \qquad |d_2-\alpha_2|\leq \mu_1 I_2.
\]
where $c_{21}=2^{k-1}h_n^2\delta_k^{-1}+ c_8$.

Using these inequalities and expression (\ref{eq4_44}) allows us to write
\begin{equation}\label{eq4_47}
|P(d_1)|<
\begin{cases}
c_{22}\cdot  Q_1\cdot \mu_1 I_1,\quad t < 1-\gamma,\\
c_{22}\cdot  Q_1^t,\quad 1-\gamma \leq t < 1,
\end{cases}\qquad  |P(d_2)|< c_{22} \cdot Q_1\cdot \mu_1 I_2.
\end{equation}

Fix a vector $\mathbf{A}_{1}=(a_k,\ldots,a_{2})$, where $a_k,\ldots,a_{2}$ will denote the coefficients of the polynomial $P\in \Pl_{k}(Q_1)$.
 Consider a subclass $\Pl_{k}(\mathbf{A}_{1})$ of polynomials $P$ which satisfy (\ref{eq4_41}) and have the same vector of coefficients $\mathbf{A}_{1}$. For $Q_1>Q_0$, the number of such classes can be estimated as follows
\begin{equation}\label{eq4_48}
\#\{\mathbf{A}_{1}\}=(2Q_1+1)^{k-1}< 2^{k} Q_1^{k-1}.
\end{equation}
Let us estimate the value $\#\Pl_{k}(\mathbf{A}_{1})$. Take a polynomial $P_0\in\Pl_{k}(\mathbf{A}_{1})$ and consider the difference between the polynomials $P_0$ and $P_j\in\Pl_{k}(\mathbf{A}_{1})$ at points $d_i$, $i=1,2$. By (\ref{eq4_47}), we have that:
\[
|P_0(d_1)-P_j(d_1)|=|(a_{0,1}-a_{j,1})d_1+(a_{0,0}-a_{j,0})|\leq \begin{cases}
2c_{22}\cdot  Q_1\mu_1 I_1,\quad t < 1-\gamma,\\
2c_{22}\cdot  Q_1^t,\quad 1-\gamma \leq t \leq 1,
\end{cases}
\]
\[
|P_0(d_2)-P_j(d_2)|=|(a_{0,1}-a_{j,1})d_2+(a_{0,0}-a_{j,0})|\leq 2c_{22}\cdot  Q_1\mu_1 I_2.
\]
This implies that the number of different polynomials $P_j\in\Pl_{k}(\mathbf{A}_{1})$ does not exceed the number of integer solutions to the system
\[
|b_1d_i+b_0|\leq K_i,\quad i=1,2,
\]
where $K_2=2c_{22}\cdot  Q_1\mu_1 I_2$ and $K_1=
2c_{22}\cdot  Q_1\mu_1 I_1$ if $t < 1-\gamma$ and $K_1=2c_{22}\cdot  Q_1^t$ if $1-\gamma \leq t \leq 1$.

It is easy to see that $K_i\ge 2c_{22}\cdot Q_1^{1-\gamma}>Q_1^{\varepsilon}$ for $Q_1>Q_0$. Thus, by Lemma \ref{lm6}, we have
\[
\#\Pl_{k}(\mathbf{A}_{1})\leq
\begin{cases}
2^7\varepsilon_1^{-1}\cdot  Q_1^{2}\cdot \mu_2\Pi,\quad t < 1-\gamma,\\
2^7\varepsilon_1^{-1}\cdot  Q_1^{t+1}\cdot \mu_1 I_2,\quad 1-\gamma \leq t \leq 1.
\end{cases}
\]

This estimate and the inequality (\ref{eq4_48}) mean that the number $N$ of polynomials $P\in \Pl_{k}(Q_1)$ satisfying the system (\ref{eq4_41}) can be estimated as follows:
\begin{equation}\label{eq4_51}
N\leq
\begin{cases}
2^{k+7}\varepsilon_1^{-1}\cdot  Q_1^{k+1}\cdot \mu_2\Pi,\quad t < 1-\gamma,\\
2^{k+7}\varepsilon_1^{-1}\cdot  Q_1^{k+t}\cdot \mu_1 I_2,\quad 1-\gamma \leq t \leq 1.
\end{cases}
\end{equation}
On the other hand, the measure of the set $\sigma_{P}(\boldsymbol{\alpha},t)$ satisfies the inequality
\begin{equation}\label{eq4_52}
\mu_2\sigma_{P}(\boldsymbol{\alpha},t)\leq
\begin{cases}
2^{2k}h_n^4\delta_k^{-2}\cdot  Q_1^{-k-2+\frac{1+\gamma}{2}},\quad t < 1-\gamma,\\
2^{2k}h_n^4\delta_k^{-2}\cdot  Q_1^{-k-1-t+\frac{1+\gamma}{2}}\cdot \mu_1 I_1,\quad 1-\gamma \leq t \leq 1.
\end{cases}
\end{equation}

Then, by estimates (\ref{eq4_51}) and (\ref{eq4_52}), for $Q_1>Q_0$ we can write
\begin{equation}\label{eq4_50}
\mu_2 L_3^2(k,t)\leq 2^{3k+7}\delta_k^{-2}h_n^4\varepsilon_1^{-1} Q_1^{-\frac{1-\gamma}{2}}\mu_2\Pi <\textstyle\frac{1}{24n(N_2+1)}\cdot\mu_2\Pi.
\end{equation}

The inequalities (\ref{eq4_43}) and (\ref{eq4_50}) lead to the following estimate for the measure of the set $L_{3}(k)$, $2\leq l\leq n-1$:
\[
\mu_2 L_{3}(k,\gamma) \leq \sum\limits_{i=0}^{N_1}\mu_2L_3^1(k, 1-i(1-\gamma)) + 
\sum\limits_{i=0}^{N_2}\mu_2L_3^2(k, 1-i(1-3\gamma)/2)\leq \textstyle\frac{1}{12n}\cdot \mu_2\Pi.
\]

Now let us estimate the measure of the set $L_3(1,1-\gamma)$. For every point $\mathbf{x}\in L_3(1,1-\gamma)$ there exists a rational point $\frac{a_0}{a_1}$ such that
\[
\left|x_1-\textstyle\frac{a_0}{a_1}\right|\cdot\left|x_2-\textstyle\frac{a_0}{a_1}\right|< h_n^2Q_1^{-\gamma}|a_1|^{-2}.
\]
Since $|x_1-x_2|>\varepsilon_1$ one of the values $\left|x_i-\textstyle\frac{a_0}{a_1}\right|$, $i=1,2$ is bigger than $\frac{\varepsilon_1}{2}$. Thus we consider the sets
\begin{equation}\label{eq4_46}
\sigma_i\left(a_0/a_1\right):= 
\left\{
\mathbf{x}\in\Pi: \left|x_i-\textstyle\frac{a_0}{a_1}\right|\leq 2h_n^2\varepsilon_1^{-1}Q_1^{-\gamma}|a_1|^{-2}\right\},\quad i=1,2.
\end{equation}
Simple calculations show that for $c_8>4h_n^2\varepsilon_1^{-1}$ we have
\[
\mu_2 \sigma_i\left(a_0/a_1\right)\leq 4h_n^2\varepsilon_1^{-1}c_8Q_1^{-2\gamma}\leq \mu_2\Pi.
\]
Let us define the following sets
\[
\sigma_i=\bigcup\limits_{1\leq a_0,a_1\leq Q_1}\sigma_i\left(a_0/a_1\right),\quad i=1,2.
\]
It is easy to see that $L_3(1,1-\gamma)\subset \sigma_1\cup\sigma_2$ and we need to estimate the measure of the sets $\sigma_1$ and $\sigma_2$.

For a fixed value $a_1$ let us consider the set $N(a_1):=\left\{a_0\in\Z:\sigma_i\left(a_0/a_1\right)\neq\varnothing\right\}$. The cardinality of this set can be estimated by the following way:
\[
\#N(a_1)\leq\begin{cases}
3\mu_1 I_i\cdot |a_1|^{-1},\quad \left(\mu_1 I_i\right)^{-1}\leq |a_1|\leq Q_1,\\
2, \quad 1\leq |a_1|\leq \left(\mu_1 I_i\right)^{-1}.
\end{cases}
\]
These inequalities together with (\ref{eq4_46}) imply:
\begin{multline*}
\mu_2\sigma_i\leq\sum\limits_{1\leq |a_1|\leq Q_1}{N(a_1)\cdot\mu_2\sigma_i\left(a_0/a_1\right)}\leq 8h_n^2c_8\varepsilon_1^{-1}Q_1^{-2\gamma}\sum\limits_{1\leq |a_1|\leq \left(\mu_1 I_i\right)^{-1}}|a_1|^{-2} +\\
+ 12h_n^2\varepsilon_1^{-1}Q_1^{-\gamma}\mu_2\Pi\sum\limits_{\left(\mu_1 I_i\right)^{-1}\leq |a_1|\leq Q_1}|a_1|^{-1}\leq 2\pi^2c_8h_n^2\varepsilon_1^{-1}Q_1^{-2\gamma}+ \\
+ 12h_n^2\varepsilon_1^{-1}Q_1^{-\gamma}\ln Q\mu_2\Pi\leq \textstyle\frac{1}{24n}\cdot \mu_2\Pi
\end{multline*}
for $Q_1>Q_0$ and $c_8>96n\pi^2h_n^2\varepsilon_1^{-1}$.
Then, 
\[
\mu_2 L_3(1,1-\gamma)\leq \textstyle\frac{1}{12n}\cdot \mu_2\Pi,
\]
and, finally,
\[
\mu_2 L_3\leq \sum\limits_{k=2}^{n-1}\mu_2 L_{3}(k,\gamma)+\mu_2 L_3(1,1-\gamma)\leq \textstyle\frac{n-1}{12n}\cdot\mu_2\Pi\leq \textstyle\frac{1}{12}\cdot\mu_2\Pi.
\]
This proves Lemma \ref{lm7} in the case of reducible polynomials.

Combining the obtained estimates for the different cases yields the final estimate
\[
\mu_2 L\leq \mu_2 L_1 +\mu_2 L_2 +\mu_2 L_3 \leq \textstyle\frac14\cdot\mu_2\Pi.
\]
\end{proof}

\begin{remark}
Note, that in case of reducible polynomials we do not use the inequality $\min\limits_i\left\{|P'(x_i)|\right\}<\delta_nQ$. It means, that the set $L_3$ is the set of points $\mathbf{x}\in\Pi$ such that there exists a reducible polynomial $P\in\Pl_n(Q)$ satisfying the inequalities
\[
|P(x_i)|<h_nQ^{-v_i},\quad i=1,2.
\]
\end{remark}

\subsection{The final part of the proof}

Let us use Lemma \ref{lm7} to conclude the proof. Consider a set $B_1 = \Pi \setminus L_n(Q,\delta_n,\mathbf{v},\Pi)$ for $n\ge 2$, $v_1=v_2 = \frac{n-1}{2}$, $Q>Q_0$ and a sufficiently small constant $\delta_n$. From Lemma \ref{lm7} it follows that
\begin{equation}\label{eq5_1}
\mu_2 B_1 \ge \textstyle\frac34\cdot \mu_2\Pi.
\end{equation}
Now we prove that for every point $\mathbf{x}\in \Pi$ there exists a polynomial $P\in \Pl_n(Q)$ such that 
\[
|P(x_i)|\leq h_n\cdot Q^{-\frac{n-1}{2}},\quad i=1,2.
\]
By Minkowski's linear forms theorem \cite{Sch80} for every point $\mathbf{x}\in \Pi$ there exists a non-zero polynomial $P(t)=a_nt^n+\ldots+a_1t+ a_0\in\Z[t]$ satisfying
\[
|P(x_i)|\leq h_n\cdot Q^{-\frac{n-1}{2}},\quad |a_j|\leq \max\left(1,3|d_1|,3|d_2|\right)^{-n-1}\cdot Q \quad (i=1,2,\quad 2\leq j\leq n).
\]
One can easily verify that $|a_1|<Q$ and $|a_0|<Q$, hence $P\in \Pl_n(Q)$.

Then, by the remark of Lemma \ref{lm7} we can say that for every point $\mathbf{x}_1 \in B_1$ there exists an irreducible polynomial $P_1\in\mathcal{P}_{n}(Q)$ such that
\[
\begin{cases}
|P_1(x_{1,i})|<h_n\cdot Q^{-\frac{n-1}{2}},\\
|P_1'(x_{1,i})|>\delta_n\cdot Q,\quad i=1,2.
\end{cases}
\]
Let us consider the roots $\alpha_1,\alpha_2$ of the polynomial $P_1$ such that $x_{1,i}\in S(\alpha_i)$. By Lemma \ref{lm1}, we have
\begin{equation}\label{eq5_2}
|x_{1,i}-\alpha_i|\leq nh_n\delta_n^{-1}Q^{-\frac{n+1}{2}},\quad i=1,2. 
\end{equation}
Let us prove that $\alpha_1,\alpha_2 \in \R$. Assume the converse: let $\alpha_i\in\Comp$, then the number $\overline{\alpha_i}$ complex conjugate to $\alpha_i$ is also the root of the polynomial $P_1$, and $x_{1,i}\in S(\overline{\alpha_i})$. Hence, from the estimates (\ref{eq5_2}) and Lemma \ref{lm5} we have 
\[
|P'(\alpha_i)|\leq|a_n||\overline{\alpha_i}-\alpha_i|\leq c_{24}\cdot Q^{-\frac{n-1}{2}}.
\]
On the other hand, a Taylor expansion of the polynomial $P_1$ in the interval $S(\alpha_i)$ implies that 
\[
|P'(\alpha_i)|\ge \textstyle\frac12\delta_n\cdot Q.
\]
These two inequalities contradict each other. 

Let us choose a maximal system of algebraic points $\Gamma = \{\boldsymbol{\gamma}_1,\ldots,\boldsymbol{\gamma}_t\}\subset\A_n^2(Q)$ satisfying the condition that rectangles $\sigma(\boldsymbol{\gamma}_k)=\{|x_i-\gamma_{k,i}|<n\delta_n^{-1}Q^{-\frac{n+1}{2}},i=1,2\}$, $1\leq k\leq t$ do not intersect. Furthermore, let us introduce expanded rectangles
\begin{equation}\label{eq5_3}
\sigma'(\boldsymbol{\gamma}_k)=\left\{|x_i-\gamma_{k,i}|<2nh_n\delta_n^{-1}Q^{-\frac{n+1}{2}},i=1,2\right\},\quad k = \overline{1,t},
\end{equation}
and show that
\begin{equation}\label{eq5_4}
B_2\subset\bigcup_{k=1}^t \sigma'(\boldsymbol{\gamma}_k).
\end{equation}
To prove this fact, we are going to show that for any point $\mathbf{x}_1 \in B_1$ there exists a point $\boldsymbol{\gamma}_k\in\Gamma$ such that $\mathbf{x}_1\in\sigma'(\boldsymbol{\gamma}_k)$. 
Since $\mathbf{x}_1 \in B_1$, there is a point $\boldsymbol{\alpha}$ satisfying the inequalities (\ref{eq5_2}). Thus, either $\boldsymbol{\alpha}\in\Gamma$ and $\mathbf{x}_1\in\sigma'(\boldsymbol{\alpha})$, or there exists a point $\boldsymbol{\gamma}_k\in\Gamma$ satisfying
\[
|\alpha_i-\gamma_{k,i}|\leq nh_n\delta_n^{-1}Q^{-\frac{n+1}{2}},\quad i=1,2, 
\]
which implies that $\mathbf{x}_1\in\sigma'(\boldsymbol{\gamma}_k)$. Hence, from (\ref{eq5_1}),(\ref{eq5_3}) and (\ref{eq5_4}) we have:
\[
\textstyle\frac34\cdot \mu_2\Pi \leq\mu_2 B_1\leq \sum\limits_{k=1}^t{\mu_2\sigma_1(\boldsymbol{\gamma}_k)}\leq t\cdot 2^6n^2h_n^2\delta_n^{-2}Q^{-n-1},
\]
which yields the estimate
\[
\#\A_n^2(Q,\Pi)\ge t \ge c_{13}\cdot Q^{n+1}\mu_2\Pi.
\]

\section{Proof of Theorem \ref{main}}

Now we can prove Theorem \ref{main}, which is the main result of the paper.
Consider a set $L_{\varphi}(Q,\gamma,J):=\left\{\mathbf{x}\in\R^2: x_1\in J, \left|\varphi(x_1)-x_2\right|<с_1Q^{-\gamma}\right\}$. Clearly, $M^n_{\varphi}(Q,\gamma,J)= L_{\varphi}(Q,\gamma,J)\cap \A_n^2(Q)$, and our problem is reduced to  estimating the number of algebraic points in the set $\A_n^2(Q)$ lying within the strip $L_{\varphi}(Q,\gamma,J)$. 

\subsection{The lower bound}

The lower bound for $0<\gamma\leq \frac12$ was obtained in the paper \cite{BeGoKu14}, which allows us to consider the case where  $\frac12<\gamma<1$ only. 

Note that the distance between algebraically conjugate numbers is bounded from below, meaning that a certain neighborhood of the line $\varphi_1(x)=x$ must be excluded from consideration. Let us consider the set $D_0 := \left\{x\in J: |\varphi(x)-x|<\textstyle\frac{\varepsilon_1}{2}\right\}$, where $\varepsilon_1 > 0$ is a small positive constant.
Since the number of points $x\in J$ such that $\varphi(x)=x$ is finite, for a sufficiently small constant $\varepsilon_1$ we have that $\mu_1 D_0 <\textstyle\frac14\mu_1 J$. Instead of the interval $J$, let us consider the set $J\setminus D_0 = \bigcup\limits_{k} J_k$, $k\leq c_{5}+1$. The measure of this set  is larger than $\textstyle\frac34\mu_1 J$. 

For every interval $J_k=[b_{k,1},b_{k,2}]$, let us consider a strip $L_{\varphi}(Q,\gamma,J_k)$ and estimate the cardinality of the set $L_{\varphi}(Q,\gamma,J_k)\cap \A_n^2(Q)$. Let us divide the strip $L_{\varphi}(Q,\gamma,J_k)$ into subsets $E_j$ as follows:
\[
E_j :=\left\{\mathbf{x}\in\R^2: x_1\in J_{k,j}, \left|\varphi(x_1)-x_2\right|<c_1Q^{-\gamma}\right\},
\]
where $J_{k,j}=[y_{j-1},y_{j}]$, $y_0=b_{k,1}$ and $y_{j+1}=y_j+c_8Q^{-\gamma}$. The number $t_k$ of subsets $E_j$ can be estimated by the following way:
\begin{equation}\label{eq6_11}
t_k\ge \mu_1 J_k\cdot\left(\mu_1 J_{k,j}\right)^{-1}-1\ge \textstyle\frac12\cdot c_8^{-1}Q^{\gamma}\mu_1 J_k.
\end{equation}
For $\overline{\varphi}_j=\textstyle\frac12\left(\max\limits_{x\in J_{k,j}}{\varphi(x)}+\min\limits_{x\in J_{k,j}}{\varphi(x)}\right)$ consider the squares defined as 
\[
\Pi_j :=\left\{\mathbf{x}\in\R^2: x_1\in J_{k,j}, \left|\overline{\varphi}_j-x_2\right|<\textstyle\frac12c_8Q^{-\gamma}\right\}.
\]
Since the function $\varphi$ is continuously differentiable on the interval $J$, and $\max\limits_{x\in J}{|\varphi'(x)|}<c_{6}$, we get by the mean value theorem that
\[
\left|\max\limits_{x\in J_{k,j}}{\varphi(x)}-\min\limits_{x\in J_{k,j}}{\varphi(x)}\right|<c_{6}\cdot c_8Q^{-\gamma},
\]
which implies that the square $\Pi_j$ is contained in a subset $E_j$. Thus, every set $E_j$ defines the respective square $\Pi_j=I_{j,1}\times I_{j,2}$ of size $\mu_2\Pi_j = c_8^2Q^{-2\gamma}$.

Let us estimate the number of {\it $\left(\textstyle\frac12, \textstyle\frac12\right)$-special} squares $\Pi_j$. To obtain this estimate, let us derive an upper bound on the number of squares $\Pi_j$ satisfying the {\it $\left(l,\frac12,\frac12\right)$-condition} for every $1\leq l\leq L+2$.

For polynomials $P\in\Pl_2(Q)$ of form $P(t)=a_2t^2+a_1t+a_0$, satisfying the conditions
\begin{equation}\label{eq6_0}
\delta Q^{\lambda_{l+1}}\leq|a_2|<\delta Q^{\lambda_l},\quad |P(x_i)|<h\cdot Q^{-\frac12},\quad i=1,2,
\end{equation}
denote by $\Pl_2(Q,l,D)$ a subclass of polynomials $P\in\Pl_2(Q)$ satisfying the inequalities (\ref{eq6_0}) at some point $\mathbf{x}\in D\subset \R^2$. By the definition, if a square $\Pi_j$ satisfies the {\it $\left(l,\frac12,\frac12\right)$-condition}, then the following inequality holds:
\[
\# \Pl_2(Q,l,\Pi_j) \leq \delta^3\cdot 2^{l+3}Q^{1+2\lambda_{l+1}}\mu_2\Pi_j.
\]
Consider the expanded sets $E_s = \bigcup\limits_{i=j_s}^{j_s+T(l)}{E_i}$ composed of $T(l)$ subsets $E_j$, where
\begin{equation}\label{eq6_9}
T(l)=c_{24} Q^{\gamma-\lambda_l}, \qquad 
c_{24}=\textstyle\frac{1}{8}\cdot\delta^{-1}c_8^{-1}\left(|d_1|+|d_2|+\varepsilon_1\right)^{-1}\cdot\min\left\{c_{6},\varepsilon_1^{-1}\right\},
\end{equation}
and $j_1=1$, $j_{s+1}=j_s+T(l)+1$. By the inequality (\ref{eq6_11}), the number of expanded sets can be estimated as follows:
\[
s\leq t_k\cdot T(l)^{-1}\leq c_8T(l)^{-1}Q^{\gamma}\mu_1 J_k.
\]

Now let us show that at least $\left(1-2^{-l-3}\right)\cdot T(l)$ squares $\Pi_j\subset E_s$ satisfy the {\it $\left(l,\frac12,\frac12\right)$-condition}.
By definition of the set $E_s$, for every point $\mathbf{x}\in E_s$ we obtain:
\begin{equation}\label{eq6_2}
x_1\in I_1,\qquad\mu_1 I_1=c_8\cdot c_{24} Q^{-\lambda_l}.
\end{equation}
On the other hand, since $\varphi$ is continuously differentiable on the interval $J$ and $\max\limits_{x\in J}{|\varphi'(x)|}<c_{6}$ then $E_s\subset \Pi$, where $\Pi = I_1\times I_2$ and $\mu_1 I_2 = c_{6} \mu_1 I_1$. Thus $\#\Pl_2(Q,l,E_s)\leq \#\Pl_2(Q,l,\Pi)$, and we only need to estimate the quantity $\#\Pl_2(Q,l,\Pi)$.

By the third inequality of Lemma \ref{lm1}, for every polynomial $P\in\Pl_2(Q,l,\Pi)$ satisfying the system (\ref{eq6_0}) at a point $\mathbf{x}_0\in \Pi$, the inequalities
\begin{equation}\label{eq6_3}
|x_{0,i}-\alpha_i|<\left(|P(x_{0,i})|\cdot|a_2|^{-1}\right)^{-\frac12} < h^{\frac12}\cdot Q^{-\frac14} < \textstyle\frac{\varepsilon_1}{8},\\
\end{equation}
are satisfied for $Q>Q_0$ and $x_{0,i}\in S(\alpha_i)$. From (\ref{eq6_3}) and the condition $|x_1-x_2|>\varepsilon_1$, we obtain the following lower bound for $|P'(\alpha_i)|$:
\begin{equation}\label{eq6_5}
|P'(\alpha_i)|=|a_2|\cdot|\alpha_1-\alpha_2|>\textstyle\frac{3}{4}\cdot \varepsilon_1\cdot|a_2|.
\end{equation}
Moreover, from the inequalities (\ref{eq6_3}) we have
\begin{equation}\label{eq6_6}
|P'(x_{0,i})|\leq|a_2|\cdot\left(|\alpha_1-x_{0,i}|+|\alpha_2-x_{0,i}|\right)\leq \left(|d_1|+|d_2|+\textstyle\frac12\varepsilon_1\right)\cdot|a_2|,
\end{equation}
where $\mathbf{d}$ is a midpoint of the rectangle $\Pi$. Let us estimate the polynomials $P\in\Pl_2(Q,l,\Pi)$ at a point $\mathbf{d}\in\Pi$. From a Taylor expansion of the polynomial $P$ in the interval $I_i$ and inequalities (\ref{eq6_0}), (\ref{eq6_6}) we have:
\begin{equation}\label{eq6_6}
|P(d_i)|<\left(|d_1|+|d_2|+\varepsilon_1\right)\cdot|a_2|\cdot\mu_1 I_i.
\end{equation}

Fix a number $a$ and consider a subclass of polynomials $P$ with the same leading coefficient:
\[
\Pl_2(Q,l,\Pi,a):=\{P\in\Pl_2(Q,l,\Pi):a_2=a\}.
\]
It is clear that the inequality $\#\Pl_2(Q,l,\Pi,a) > 0$ holds only if the conditions (\ref{eq6_0}) are satisfied. Hence, the number of classes under consideration can be estimated as follows:
\begin{equation}\label{eq6_7}
\#\{a\}\leq \delta Q^{\lambda_l}.
\end{equation}

Now let us estimate the number of polynomials in a subclass $\Pl_2(Q,l,\Pi,a)$. Choose a polynomial $P_0\in\Pl_2(Q,l,\Pi,a)$ and consider the differences between polynomials $P_0$ and $P_j\in\Pl_2(Q,l,\Pi,a)$ at a point $\mathbf{d}$. From the estimates (\ref{eq6_6}) it follows that
\[
|P_0(d_i)-P_j(x_{0,i})|=|(a_{0,1}-a_{j,1})d_i+(a_{0,0}-a_{j,0})|\leq 2c_{25}\cdot |a|\cdot\mu_1 I_i,
\]
where $c_{25}=|d_1|+|d_2|+\varepsilon_1$. Thus, the number of different polynomials $P_j\in\Pl_2(Q,l,\Pi,a)$ does not exceed the number of integer solutions of the following system:
\[
|b_1d_i+b_0|\leq 2c_{25}\cdot |a|\cdot\mu_1 I_i,\quad i=1,2.
\]
Let us apply Lemma \ref{lm6} with $K_i=2c_{25}\cdot |a|\cdot\mu_1 I_i$. From the estimates (\ref{eq6_0}) and (\ref{eq6_2}), we can easily verify that $4\varepsilon_1^{-1} K_1 < 1$ and $4K_2 < 1$, which leads to the inequality
\begin{equation}\label{eq6_12}
\#\Pl_2(Q,l,\Pi,a)\leq 1.
\end{equation}
Hence, from  the inequality (\ref{eq6_7}) we obtain the estimate
\begin{equation}\label{eq6_8}
\#\Pl_2(Q,l,\Pi)= \sum\limits_{a}\#\Pl_2(Q,l,\Pi,a) \leq \delta Q^{\lambda_l}.
\end{equation}

Let us consider the case where $1\leq l\leq L+1$. Assume that the inequality 
\begin{equation}\label{eq6_10}
\#\Pl_2(Q,l,\Pi_j) > \delta^3\cdot 2^{l+3}Q^{1+2\lambda_{l+1}}\mu_2\Pi_j
\end{equation}
holds for $2^{-l-3}\cdot T(l)$ squares $\Pi_j$. By Lemma \ref{lm1}, for a polynomial $P\in\Pl_2(Q)$ the set of points $\mathbf{x}$ satisfying (\ref{eq6_0}) is contained in the following set:
\[
\sigma_P:=\left\{|x_i-\alpha_i|\leq hQ^{-\frac12}\cdot|P'(\alpha_i)|^{-1}, x_i\in S(\alpha_i), i=1,2\right\}.
\]
From the inequalities (\ref{eq6_0}) and (\ref{eq6_5}) it is easy to see that the measure of the set $\sigma_P$ is at most a half of the size of $\Pi_j$ for $1\leq l\leq L+1$ and $c_8 > h\delta^{-1}\varepsilon_1^{-1}$.
Therefore no polynomial $P\in\Pl_2(Q)$ satisfies the inequalities (\ref{eq6_0}) at three points that lie inside three different squares $\Pi_j$.

Since $\Pi_j\subset E_j\subset E\subset \Pi$ we have $\bigcup\limits_j{\Pi_j}\subset \Pi$. Then by our assumption and the inequality $\#\Pl_2(Q,l,\Pi_j)\ge 0$ we get:
\[
\#\Pl_2(Q,l,\Pi)\ge \sum\limits_{i=j_s}^{j_s+T(l)}{\#\Pl_2(Q,l,\Pi_i)} \ge\textstyle\frac{1}{2^{l+3}}\cdot T(l)\cdot \#\Pl_2(Q,l,\Pi_j).
\]
From the inequalities (\ref{eq6_9}) and (\ref{eq6_10}) for $1\leq l\leq L$, we obtain:
\[
\#\Pl_2(Q,l,\Pi)\ge c_{24}\delta^3\cdot c_8^2\cdot Q^{1-\gamma+2\lambda_{l+1}-\lambda_l}>\delta Q^{\lambda_l},
\]
for $c_8 > 8\delta^{-1}c_{25}\cdot\left(\min\left\{c_{6}, \varepsilon_1^{-1}\right\}\right)^{-1}$. This inequality  contradicts the estimate (\ref{eq6_8}). 

For $l =L +1$ we can use the inequalities (\ref{eq6_9}) and (\ref{eq6_10}) to obtain:
\[
\#\Pl_2(Q,l,\Pi)\ge c_{24}\delta^3\cdot c_8^2\cdot  Q^{\gamma-\lambda_{L+1}}> \delta Q^{\gamma-1+\frac{1-\gamma}{2}\cdot \left[\frac{3-2\gamma}{1-\gamma}\right]}\ge \delta Q^{\gamma-1+\frac{3-2\gamma}{2}-\frac{1-\gamma}{2}}>\delta Q^{\frac{\gamma}{2}},
\]
for $c_8 > 8\delta^{-1}c_{25}\cdot\left(\min\left\{c_{6}, \varepsilon_1^{-1}\right\}\right)^{-1}$. On the other hand, the estimates (\ref{eq6_8}) imply that:
\[
\#\Pl_2(Q,l,\Pi)\leq \delta Q^{\lambda_{L+1}}=\delta Q^{1-\frac{1-\gamma}{2}\cdot \left[\frac{3-2\gamma}{1-\gamma}\right]}\leq \delta Q^{1-\frac{3-2\gamma}{2}}= \delta Q^{\gamma-\frac12} < \delta Q^{\frac{\gamma}{2}}
\]
for $\gamma < 1$, which contradicts the previous inequality.

This argument proves that the number of squares $\Pi_j \subset E_s$, satisfying the {\it $\left(l,\frac12, \frac12\right)$-condition} for $1\leq l\leq L+1$ is larger than $\left(1-2^{-l-3}\right)\cdot T(l)$. 

The case where $l = L+2$ needs to be treated differently. From Lemma \ref{lm1} and the inequalities (\ref{eq6_5}), it follows that the set of points $\mathbf{x}$ satisfying the inequalities (\ref{eq6_0}) for some polynomial $P$ is contained in the following set:
\[
\sigma_P:=\left\{|x_i-\alpha_i|\leq h\varepsilon_1^{-1}\cdot Q^{-\frac12}\cdot|a_2|^{-1}, i=1,2\right\}.
\]
and the measure of the set $\sigma_P$ is larger than the size of the square $\Pi_j$. It means that a single polynomial can belong to a large number of different sets $\Pl_2(Q,l,\Pi_j)$. Let us estimate this number for a fixed polynomial $P\in \Pl_2(Q,l,\Pi)$. Since the side of the square $\sigma_P$ is larger than the width of the strip $L_{\varphi}(Q,\gamma,J_k)$, we have:
\[
\#\{\Pi_j: P\in\Pl_2(Q,l,\Pi_j)\}\leq 2h\varepsilon_1^{-1}c_8^{-1}\cdot Q^{\gamma-\frac12}\cdot|a_2|^{-1}.
\]
Now from inequalities (\ref{eq6_8}) and the estimates (\ref{eq6_12}), we can obtain that
\[
\begin{split}
\#\bigcup\limits_{P\in\Pl_2(Q,l,\Pi)}\{\Pi_j: P\in\Pl_2(Q,l,\Pi_j)\}&\leq 2h\varepsilon_1^{-1}c_8^{-1}\cdot Q^{\gamma-\frac12}\sum\limits_{1\leq|a_2|<\delta Q^{\gamma-\frac12}}{|a_2|^{-1}}\leq \\
&\leq  2^4\varepsilon_1^{-1}hc_8^{-1}\left(\gamma-\textstyle\frac12\right)Q^{\gamma-\frac12}\ln Q <\textstyle\frac{1}{2^{l+3}}\cdot T(l),
\end{split}
\]
for $\gamma < 1$ and $Q>Q_0$.

This implies that the inequality $\#\Pl_2(Q,l,\Pi)>0$ can only be satisfied for $2^{-l-3}\cdot T(l)$ squares $\Pi_j\subset E_s$ and, therefore the number of squares $\Pi_j \subset E_s$, satisfying the {\it $\left(l,\frac12, \frac12\right)$-condition} for $l= L+2$ is larger than $\left(1-2^{-l-3}\right)\cdot T(l)$.

Now inequality (\ref{eq6_11}) yields that the number of squares $\Pi_j\in L_{\varphi}(Q,\gamma,J_k)$ satisfying the {\it $\left(l,\frac12,\frac12\right)$-condition} for $1\leq l\leq L+2$ is bigger than $\left(1-\frac{1}{2^{l+3}}\right)\cdot t_k$. Thus,  we have: 
\[
\sum\limits_{\substack{P_j,l: P_j \text{ satisfy } \\
\left(l,1/2, 1/2\right)\text{-condition}}}{1}\ge\sum\limits_{l=1}^{L+2}{\left(1-\textstyle\frac{1}{2^{l+3}}\right)\cdot t_k}= \left(L+2-\textstyle\frac14+\textstyle\frac{1}{2^{L+3}}\right) \cdot t_k > \left(L+\textstyle\frac74\right) \cdot t_k.
\]
Assume that the number of squares $\Pi_j\subset L_{\varphi}(Q,\gamma,J_k)$ which satisfy the {\it $\left(l,\frac12,\frac12\right)$-condition} for all $1\leq l\leq L+2$ is smaller than $\textstyle\frac34\cdot t_k$. Then we have
\[
\sum\limits_{\substack{P_j,l: P_j \text{ satisfy } \\
\left(l,1/2, 1/2\right)\text{-condition}}}{1}\leq \textstyle\frac34\cdot t_k\cdot (L+2)+ \textstyle\frac14\cdot t_k\cdot (L+1) = \left(L+\textstyle\frac74\right) \cdot t_k,
\]
which contradicts the previous estimate. Thus, there exist at least $\textstyle\frac34\cdot t_k$  {\it $\left(\textstyle\frac12, \textstyle\frac12\right)$-special} squares $\Pi_j\subset L_{\varphi}(Q,\gamma,J_k)$. These squares satisfy the conditions of Theorem \ref{th2}, allowing us to write the following estimate:
\[
\# \A_n^2(Q,\Pi_j) \ge c_{13}Q^{n+1}\mu_2\Pi_j = c_{13}c_8^2\cdot Q^{n+1-2\gamma}.
\]
The inequality (\ref{eq6_11}) and the upper bound on the number of {\it $\left(\textstyle\frac12, \textstyle\frac12\right)$-special} squares imply that
\[
\#\left(L_{\varphi}(Q,\gamma,J_k)\cap \A_n^2(Q)\right) \ge \textstyle\frac34 c_{13}c_8^2\cdot t_k\cdot Q^{n+1-2\gamma} \ge \textstyle\frac38 c_{13}c_8\cdot Q^{n+1-\gamma}\mu_1 J_k.
\]

These inequalities, in turn,  lead us to the following lower bound on $\# M_{\varphi}(Q,J,\gamma)$:
\[
\# M_{\varphi}(Q,J,\gamma) \ge \textstyle\frac38 c_{13}c_8\cdot Q^{n+1-\gamma} \sum\limits_k\mu_1 J_k \ge \textstyle\frac{9}{32} c_{13}c_8\cdot \mu_1 J\cdot  Q^{n+1-\gamma} = c_{2}\cdot Q^{n+1-\gamma}.
\]

\subsection{Upper bound}

As in the previous section, let us divide the set $L_{\varphi}(Q,\gamma,J)$, $J=[b_1,b_2]$ into subsets $E_j$, $1\leq j\leq t$:
\[
E_j :=\left\{\mathbf{x}\in\R^2: x_1\in J_{j}, \left|\varphi(x_1)-x_2\right|<\left(\textstyle\frac12+c_{6}\right)\cdot c_8Q^{-\gamma}\right\},
\]
where $J_{j}=[y_{j-1},y_{j}]$, $y_0=b_{1}$ и $y_{j+1}=y_j+\left(\textstyle\frac12+\textstyle\frac32c_{6}\right)\cdot c_8Q^{-\gamma}$ and the number of subsets $E_j$ satisfies the inequality
\begin{equation}\label{eq6_13}
t\leq \mu_1 J\cdot\left(\mu_1 J_{j}\right)^{-1}\leq \left(\textstyle\frac12+\textstyle\frac32c_{6}\right)^{-1}\cdot c_8^{-1}Q^{\gamma}\mu_1 J.
\end{equation}

Once again for $\overline{\varphi}_j=\textstyle\frac12\left(\max\limits_{x\in J_{j}}{\varphi(x)}+\min\limits_{x\in J_{j}}{\varphi(x)}\right)$ let us consider the squares
\[
\Pi_j :=\left\{\mathbf{x}\in\R^2: x_1\in J_{j}, \left|\overline{\varphi}_j-x_2\right|<\left(\textstyle\frac12+\textstyle\frac32c_{6}\right)\cdot c_8Q^{-\gamma}\right\}.
\]
Since the function $\varphi$ is continuously differentiable on the interval $J$, and $\max\limits_{x\in J}{|\varphi'(x)|}<c_{6}$, it is easy to see that each subset $E_j$ is contained in the respective square $\Pi_j$: $E_j\subset \Pi_j$, $1\leq j\leq t$. 

Note that the squares $\Pi_j$ satisfy the conditions of Theorem \ref{th1}. Therefore, we have
\[
\# \A_n^2(Q,\Pi_j) \leq c_{12}Q^{n+1}\mu_2\Pi_j = c_{12}c_8^2\left(\textstyle\frac12+\textstyle\frac32c_{6}\right)^2\cdot Q^{n+1-2\gamma}.
\]
These inequalities, together with the estimate (\ref{eq6_13}), lead to  the following upper bound for $\# M_{\varphi}(Q,I,\gamma)$:
\[
\# M_{\varphi}(Q,J,\gamma) \leq \sum\limits_{j=1}^t \# \A_n^2(Q,\Pi_j)\leq c_{12}c_8\left(\textstyle\frac12+\textstyle\frac32c_{3}\right)\cdot \mu_1 J\cdot Q^{n+1-\gamma} = c_{3}\cdot Q^{n+1-\gamma}.
\]

\bigskip

\newpage

УДК 511.42

Распределение точек с алгебраически сопряженными координатами в окрестности гладких кривых

Key words and phrases: algebraic numbers, metric theory of Diophantine approximation, Lebesgue measure.

Ключевые слова: алгебраические числа, метрическая теория диофантовых приближений, мера Лебега.

Аннотация.

Пусть $\varphi:\R\rightarrow \R$ непрерывно дифференциируемая на интервале $J\subset\R$ функция и пусть $\boldsymbol{\alpha}=(\alpha_1,\alpha_2)$ точка с алгебраически сопряженными координатами, минимальный многочлен $P$ которых является многочленом степени $\leq n$ и высоты $\leq Q$. Определим через $M^n_\varphi(Q,\gamma, J)$ множество точек $\boldsymbol{\alpha}$, удовлетворяющих условию $|\varphi(\alpha_1)-\alpha_2|\leq c_1 Q^{-\gamma}$. В работе доказано, что для любого действительного $0<\gamma<1$ и достаточно большого $Q$ существуют положительные величины $c_2<c_3$, не зависят от $Q$, для которых выполняются оценки $c_2\cdot Q^{n+1-\gamma}<\# M^n_\varphi(Q,\gamma, J)< c_3\cdot Q^{n+1-\gamma}$.

\newpage

\bigskip

{\noindent Берник Василий Иванович}\\
{\small
{Институт математики НАН Беларуси,\\
ул. Сурганова, 11, 220072, Минск, Беларусь}\\
E-mail: bernik@im.bas-net.by
}

\bigskip

{\noindent Гётце Фридрих}\\
{\small
{Department of Mathematics, University of Bielefeld,\\
Postfach 100131, 33501, Bielefeld, Germany}\\
E-mail: goetze@math.uni-bielefeld.de

\bigskip

{\noindent Гусакова Анна Григорьевна}\\
{\small
{Институт математики НАН Беларуси,\\
ул. Сурганова, 11, 220072, Минск, Беларусь}\\
E-mail: gusakova.anna.0@gmail.com
}

\end{document}